\newcommand{\C}{\mathbb C}
\newcommand{\D}{\mathbb D}
\newcommand{\R}{\mathbb R}
\newcommand{\N}{\mathbb N}
\newcommand{\Z}{\mathbb Z}
\newcommand{\imply}{\Longrightarrow}
\newcommand{\fE}{\mathcal E}
\newcommand{\fO}{\mathcal O}
\newcommand{\fP}{\mathcal P}
\newcommand{\union}{\cup}
\newcommand{\intersection}{\cap}
\newcommand{\Kaff}{K_{\text{aff}}}
\newcommand{\Kconf}{K_{\text{conf}}}
\newtheorem{thm}{Theorem}[section]
\newtheorem{pro}[thm]{Proposition}
\theoremstyle{definition}
\newtheorem{defn}[thm]{Definition}  %%%%% here and below, but preferred
\newtheorem{exam}[thm]{Example}     %%%%% by the journal
\begin{document}

\title[A non FLC regular pentagonal tiling of the plane]
 {A non FLC regular pentagonal tiling of the plane}

\author{Maria Ramirez-Solano}
\thanks{}

\address{Department of Mathematics, University of Copenhagen, Universitetsparken 5,
2100 K\o benhavn \O ,
 Denmark.}

\email{mrs@math.ku.dk}

\keywords{}

\subjclass{}

\thanks{Supported by the Danish National Research Foundation through the Centre
for Symmetry and Deformation (DNRF92), and by the Faculty of Science of the University of Copenhagen.}

\begin{abstract}
In this paper we describe the pentagonal tiling of the plane defined in the article "A regular pentagonal tiling of the plane" by P. L. Bowers and K. Stephenson as a conformal substitution tiling and summarize many of its properties given in the mentioned article.
We show furthermore why such tiling is not  FLC with respect to the set of conformal isomorphisms.
\end{abstract}

\maketitle

\section*{Introduction}
The conformally regular pentagonal tiling of the plane described in the article \cite{StephensonBowers97} is the main character in this work.
The goal is to describe this tiling as a conformal substitution tiling, i.e. a tiling generated by a substitution rule with complex scaling factor $\gamma>1$ and a finite number of prototiles, where  each prototile is substituted with "extended-conformal" copies of the prototiles.
To this end, we begin by stating some results from the theory of Euclidean tilings of the plane. For more details, see \cite{Bellissard06}, \cite{PutnamBible95}, \cite{PutnamCstarKtheory00}, \cite{Sadun08}, \cite{Sol98}.
Then, we introduce the concept of combinatorial tilings. See \cite{TopologyJanich84}, \cite{Hatcher02}, \cite{May99} for more details.
Next we give an overview of the construction of the pentagonal tiling including material from \cite{RiemSurfBeardonBook}, \cite{CirclePackingStephenson} for the overview.
The   main properties of the pentagonal tiling are stated in Proposition \ref{p:propertiesofT}. We should remark that these properties are consequences of the results in  \cite{StephensonBowers97}, and not merely borrowed statements.
We end this work with Theorem \ref{t:TnoFLCwrtConfAutOfC} and some remarks on why we have to abandon the notion of FLC with respect to the set of conformal isomorphisms.

\section{A little theory on tilings of the plane}
A \emph{tile} is a closed subset of the plane homeomorphic to the closed unit disk.
A \emph{tiling} $T$ is a cover of the plane by tiles that intersect only on their boundaries. More precisely, $T$ is a collection of tiles, such that
their union is $\R^2$, and the intersection of any two tiles $t,t'\in T$ is the empty set or a subset of the boundary $\partial t$ of $t$.
There are three ways of constructing interesting tilings and we classify them as (1) substitution tilings, which are described below; (2) cut and project tilings, which invoke a higher dimension and project globally, ie by projection of higher-dimensional structures into spaces with lower dimensionality;  (3) local matching rule tilings, which are jigsaw puzzles of the plane.
These classes are not disjoint, nor they are equal. For example the Penrose tiling is a substitution tiling and a cut and project tiling. See \cite{Sadun08}.\\

Non-periodic tilings give rise to topological dynamical systems, which in turn give rise to $C^*$-algebras which in turn give rise to $K$-groups, which are topological invariants. The construction of a topological dynamical system from a tiling is given right after we remind the reader of the definitions of group action and topological dynamical systems.\\
A \emph{group action} is a triple $(X,G,\phi)$ composed of  a topological space $X$, an Abelian group $G$, and an action map $\phi:X\times G\to X$ defined by $\phi^g:X\to X$, which is a homeomorphism for every $g\in G$, and $\phi^0=id$ and $\phi^g\circ\phi^h=\phi^{g+h}$ for every $g,h\in G$.

A \emph{dynamical system} is a group action $((X,d),G,\phi)$, where $(X,d)$ is a compact metric space called the phase space, and the group action $\phi$ is continuous. For short we write $(X,G)$ instead of $((X,d),G,\phi)$.
The study of the topological properties of  dynamical systems is called  topological dynamics, and
the study of the statistical properties of  dynamical systems is called  ergodic theory. See \cite{Rob91}.

The orbit set of a tiling $T$ is defined by
$$\fO(T):=\{T+x\mid x\in \R^2\},$$
 where $T+x:=\{t+x\mid t\in T\}$. The group $\R^2$ acts on the orbit set $\fO(T)$ of a tiling $T$ by translation, for if $T'$ is in the orbit set, then so is $T'+x$ for all $x\in\R^2$.\\
 The orbit set $\fO(T)$ is equipped with a metric $d:\fO(T)\times \fO(T)\to [0,\infty[$ defined by
 $d(T,T')<\frac 1r$ if there is $x,x'\in B_{1/r}(0)$ such that $(T-x)\cap B_r(0)=(T'-x')\cap B_r(0)$ i.e. if they agree on a ball of radius $r$ centered at the origin up to a small wiggle. See Figure  Figure \ref{f:chairtilingboth}.
 \begin{figure}[htbp]
  \begin{minipage}[b]{0.48\linewidth}
    \centering
    \includegraphics[width=\linewidth]{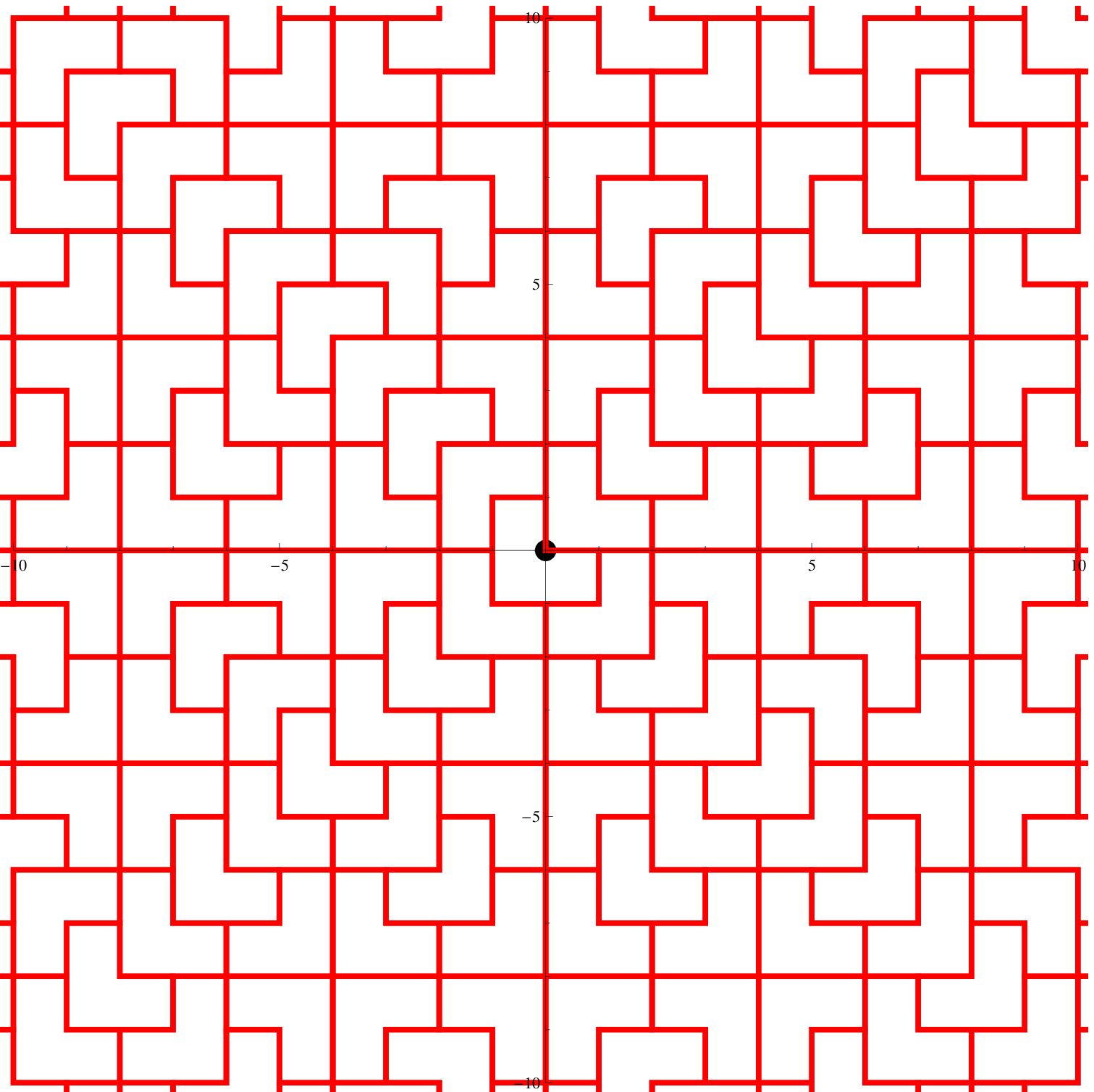}
    \caption{A tiling $T$ (Known as the chair tiling).}
    \label{f:chairtilingT}
  \end{minipage}
  \hspace{0.1cm}
  \begin{minipage}[b]{0.48\linewidth}
    \centering
    \includegraphics[width=\linewidth]{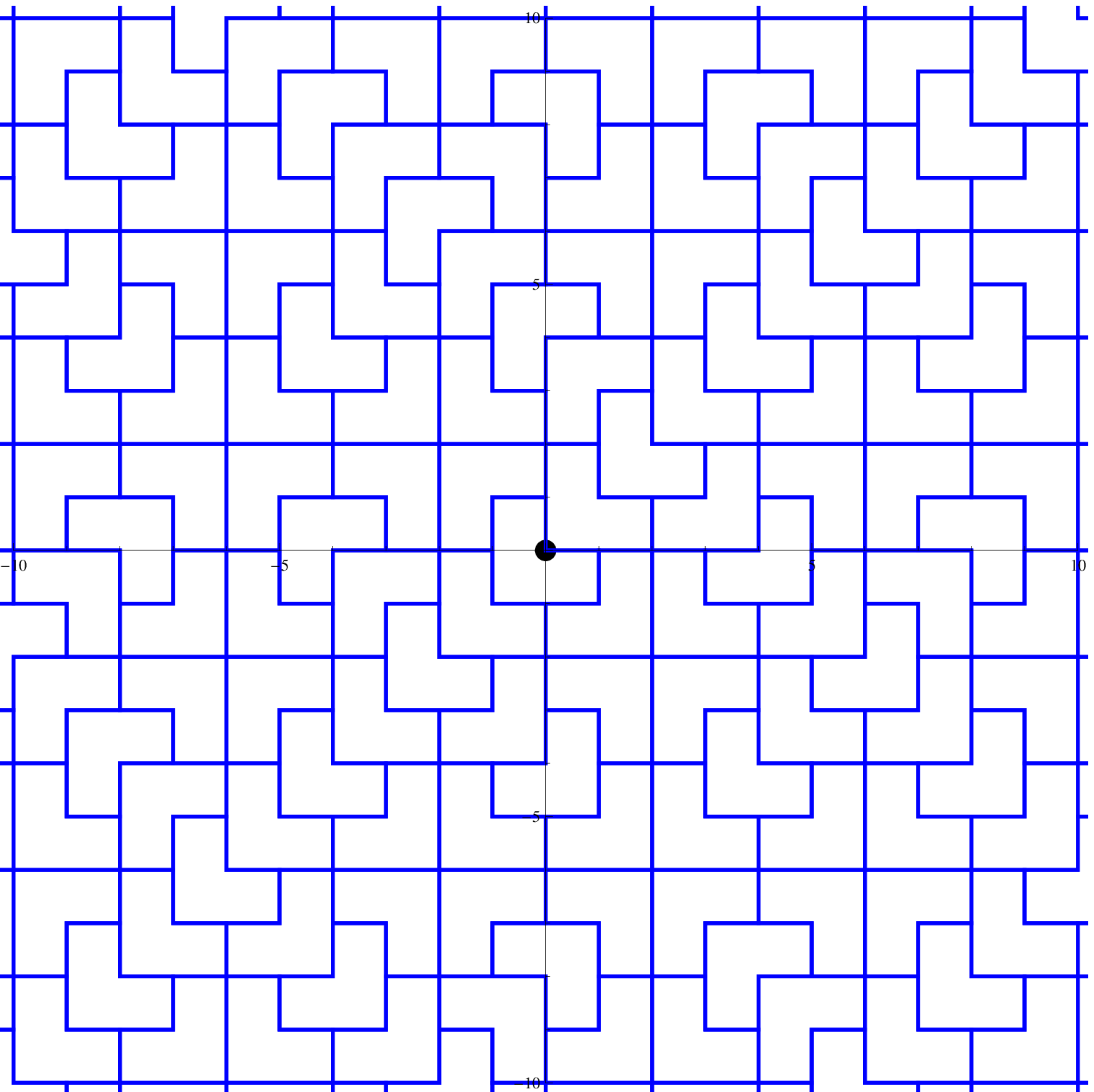}
    \caption{A translate of $T$ given by $T':=T+(2,2)$.}
    \label{f:chairtilingtranslateTp}
  \end{minipage}
\end{figure}

\begin{figure}
  % Requires \usepackage{graphicx}
  \centering
  \includegraphics[scale=.5]{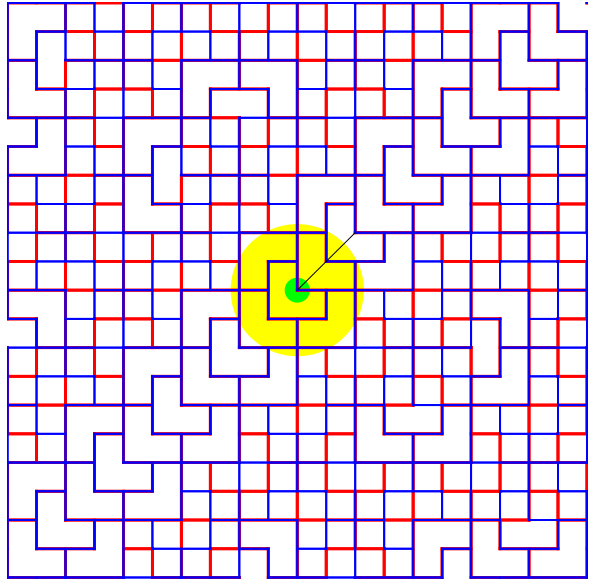}\\\
  \caption{The two tilings $T$, $T':=T+(2,2)$ shown in Figure \ref{f:chairtilingT} and Figure \ref{f:chairtilingtranslateTp} agree on the yellow(large) disk of radius $r$, so the distance $d(T,T')<1/r$. If $T'':=T'+x$ and $x$ is in the  green(small) disk of radius $1/r$, then $d(T,T'')<1/r$.}\label{f:chairtilingboth}
\end{figure}

 The continuous hull $\Omega_T$ of a  tiling $T$ is defined as the completion of the metric space $(\fO(T), d)$, i.e.
  $$\Omega_T:=\overline{\fO(T)}^d.$$
 The same definition of $d$ extends to $\Omega_T$, and  $(\Omega_T,d)$ is a metric space.
The group $\R^2$ acts also on the hull by translation,  for if $T'$ is in $\Omega_T$ then so is $T'+x$ for any $x\in \R^2$. See \cite{PutnamCstarKtheory00}.
A \emph{patch} $P$ is a finite subset of a tiling $T$.
A tiling satisfies the \emph{finite local complexity (FLC)} if for any $r>0$ there are finitely many patches of diameter less than $r$ up to a group of motion $G$, usually translation. The finite local complexity (FLC) is also called \emph{finite pattern condition}.
By Theorem 2.2 in \cite{PutnamCstarKtheory00}, if a tiling $T$ satisfies the FLC condition then the metric space $(\Omega_T,d)$  is compact.
Hence, if a tiling $T$ satisfies the FLC condition, then $(\Omega_T,\R^2)$ is a topological dynamical system. The action $\phi:\Omega_T\times\R^2\to\Omega_T$ given by $\phi^x(T'):=T'+x$ is continuous by definition of the metric.
\\
%(Ian said it).
%FOR DISCRETE STUFF:The action $\phi:\Omega_T\times\R^2\to\Omega_T$ given by $\phi^x(T'):=T'+x$ is continuous because any open subset of $\Omega_T$ remains open when translated by any $x\in \R^2$, and $\R^2$ is equipped with the discrete topology.)

A topological dynamical system $(\Omega_T,\R^d)$ is \emph{minimal} if every orbit is dense.
By construction, the orbit $\fO(T)$ of $T$ is dense in $\Omega_T$.
Hence $(\Omega_T,\R^d)$ is topologically transitive.
A tiling $T$ is said to be \emph{repetitive} if for every patch $P\subset T$ there is an $r>0$ such that every ball of radius $r$ contains a copy of $P$.
A tiling $T$ is aperiodic if $T+x\ne T$ for all $x\ne0$.

By Proposition 2.4 in \cite{PutnamCstarKtheory00}, if a tiling is aperiodic and repetitive then its hull contains no periodic tilings.

For substitution tilings, there is an alternative definition of the hull. See \cite{PutnamBible95}. The construction of a substitution tiling is as follows. Let $P_{prot}:=\{p_1,\ldots,p_N\}$ be a set of tiles of the plane. These tiles are called prototiles.
We define the substitution rule $\omega$ with scaling factor $\lambda>1$ on $P_{prot}$ by
the map $\omega:P_{prot}\to \cup_{i=1}^N \fO(p_i)$ defined by $$p_i\mapsto  \omega(p_i):=\{p_j+x_j\mid p_j\in P_{prot}, x_j\in\R^2\}$$ such that
$\omega(p_i)$ tiles $\lambda p_i$, i.e. the tiles in $\omega(p_i)$ overlap only on their boundaries and their union is $\lambda p_i$. Moreover $\omega(p_i)$ is assumed to be finite.
We extend the definition of $\omega$ to translates of prototiles:
Define $\omega:\cup_{i=1}^N \fO(p_i)\to \cup_{i=1}^N \fO(p_i)$ by
$$\omega(p_i+x):=\omega(p_i)+\lambda x.$$
Observe that $\omega(p_i+x)$ tiles $\lambda(p_i+x)$ and not $\lambda p_i+x$, so  all the points of the set $p_i+x$ are dilated.
Thus, if $(p_i+x)\cap(p_j+y)=e$ then $\lambda(p_i+x)\cap\lambda(p_j+y)=\lambda e$ and so $\omega(p_i+x)\cap\omega(p_j+y)$ tiles $\lambda e$.
A \emph{patch} $P$ is a finite collection of translates of prototiles that overlap only on their boundaries.
Let $\fP_{patches}\subset \fP(\cup_{i=1}^N \fO(p_i))$ be the collection of patches derived from the prototiles.
We now extend the definition of $\omega$ to patches.
Let $\omega: \fP_{patches}\to \fP_{patches}$ be defined by
$$P\mapsto \omega(P):=\bigcup_{t\in P}\omega(t).$$
Observe that $\omega(P)$ tiles the set $\lambda|P|$, where $|P|$ is the union of all the tiles in $P$.
We define the $N\times N$ \emph{substitution matrix} for the substitution map $\omega$ with prototiles $P_{prot}$ by
$A:=(a_{ij})$ where $a_{ij}$ is the number of copies of prototile $p_i$  in $\omega(p_j)$.

It turns out that $\omega^n$ is also a substitution map with scaling factor $\lambda^n$.  Moreover, the substitution matrix of $\omega^n$ is $A^n$.
We say that the substitution map $\omega$ is primitive if its substitution matrix $A$ is primitive.

If the substitution matrix $A$ is \emph{primitive}, then there is $k>1$ such that $(A^k)_{ij}>0$, and hence a copy of each prototile $p_i$ appears inside the patch $\omega^k(p_j)$ for any $i,j$.
If the substitution $\omega$ is primitive, we have $p_j+x\in\omega^k(p_j+y)$ for some $x,y\in\R^2$. We can then construct an increasing sequence of supertiles
$$p_j+x_0\subset \omega^k(p_j+x_1)\subset \omega^{2k}(p_j+x_2)\subset \cdots\omega^{nk}(p_j+x_n)\subset\cdots,$$
for some $x_0,x_1,x_2,\cdots$. Their union might not necessarily cover the entire plane, as $\lambda^{nk}p_j+\lambda^{nk} x_n$ might only cover a section of the plane. The substitution is said to \emph{force its border} if there is a $m\ge1$ independent of $j$ such that $\omega^m(p_j)$ knows its neighbor tiles. Any substitution can be turned into a substitution that forces its border, simply by introducing collared tiles, which are tiles that remember their neighbors and their location relative to the tile. If the substitution forces its border, then the union of the above increasing sequence will cover the plane, and thus we have created a tiling of the plane. If the substitution does not force its border, then we use collared tiles instead together with the new substitution.\\

Define $\Omega$ to be the set of tilings $T$ whose tiles are translations of the prototiles and each patch $P$ in $T$  is contained in a supertile $\omega^n(p_i+x)$ for some $n,i,x$.
If $T$ is a tiling in $\Omega$  then so is $$\omega(T):=\bigcup_{t\in T}\omega(t).$$ We now extend $\omega$ to tilings.
Define $\omega: \Omega \to \Omega$ by $T\mapsto \omega(T)$.

The space $\Omega$ is said to satisfy the \emph{finite local complexity (FLC)} if for each $r>0$ there are only finitely many patches $P\subset T\in\Omega$ of diameter less than $r$ up to translation. Hence if $\Omega$ is FLC then every tiling in it is FLC. Moreover if $T$ is FLC then $\Omega_T$ is FLC.
% putnam said it. see email oct 1 2012.
If the substitution map $\omega$ is primitive and injective, and the space $\Omega$ satisfies the FLC condition then:
 the space $\Omega$  is nonempty by Proposition 2.1 in \cite{PutnamBible95}. The map $\omega$ is surjective by Proposition 2.2 in \cite{PutnamBible95}. The space $\Omega$ contains no periodic tilings by Proposition 2.3 in \cite{PutnamBible95}.\\
 The metric $d$ defined before defines a metric on $\Omega$. Under the same conditions that $\omega$ is primitive and injective, and $\Omega$ satisfies the FLC condition, the group action $((\Omega,d),\R^2)$, where $\R^2$ acts by translation, is a topological dynamical system, and by Corollary 3.5 in \cite{PutnamBible95} it is minimal. Hence for any $T\in\Omega$ we have
 $$\Omega=\overline{\fO(T)}^d=\Omega_T.$$
 Moreover, by Proposition 3.1 in \cite{PutnamBible95}, the substitution map $\omega:(\Omega,d)\to(\Omega,d)$ is a topologically mixing homeomorphism. In particular $\omega$ is continuous.
 Let $T$ be in $\Omega$.
 Let $\Omega'$ be the set of tilings $T'$ such that $T'$ is a tiling whose tiles are translates of the prototiles $P_{prot}$ and $T'$ is locally isomorphic to $T$. By Corollary 3.6 in \cite{PutnamBible95} we have $\Omega=\Omega'$.
 \\
Also, every primitive substitution tiling  is repetitive.
By Theorem 1.1 in \cite{Sol98}, for a  substitution tiling $T$ that satisfies the FLC condition, the substitution map $\omega$ is injective if and only if its hull $\Omega_T$ contains no periodic tilings.
\\
In conclusion we have,\\
\begin{tiny}
\begin{tabular}{ r  l  l }
$T$ FLC $\imply$& $\Omega_T$ compact. &\\
$T$ FLC: $\omega$ injective $\iff$ &$\Omega_T$ has no periodic tilings.&\\
$T$ FLC, substitution tiling, $\omega$ primitive, $\imply$& $T$ repetitive, FLC $\quad\iff$ & $(\Omega_T,\R^d)$ minimal.\\
$T$ substitution tiling, $\omega$ primitive,  $\imply$& $T$ repetitive,aperiodic $\quad\imply$& $\Omega_T$ has no periodic tilings.\\
$\Omega$ FLC, $\omega$ primitive, injective $\imply$ & $\Omega\ne\emptyset$, surjective, has no periodic tilings.&\\
$\Omega$ FLC, $\omega$ primitive, injective $\imply$ & $(\Omega,\R^2)$ minimal. Hence $\Omega=\Omega_T$.&\\.
\end{tabular}
\end{tiny}
\noindent
Thus if $T$ is FLC, aperiodic and $\omega$ is primitive, then $(\Omega_T,\R^2)$ is a minimal topological dynamical system; $\Omega_T$ has no periodic tilings, and $\omega$ is injective.

For each prototile $p_i$ choose a point $x_i\in (p_i)^\circ$ in the interior of it. We say that $x_i$ is a puncture of $p_i$.
The puncture of any translate $t=p_i+x$ is defined by $x(t):=x_i+x$.
Define the discrete hull
$$\Xi:=\{T\in \Omega\mid T\text{ has a puncture at the origin }\}.$$
Since the punctures do not lie on the boundaries, the choice of $p_i-x_i$  is unique.\\
If $T$ is FLC, then $\Xi$ is a Cantor set: compact, totally disconnected, and no isolated points.
A basis for $\Xi$ is given by the cylinder sets $U(P,t)$ defined as follows:
Let $P$ be a patch of $T$ and $t$ a tile in $P$.
Define $U(P,t):=\{T'\in\Omega_T\mid  P-x(t)\subset T\}$ to be the set of tilings that contain $P-x(t)$.
That is, $U(P,t)$ is a subset of $\Xi$ such that the patch $P-x(t)$ is centered at the origin and anything can be outside the patch.
Then $U(P,t)$ is clopen in the relative topology of $\Xi$, and such sets generate the relative topology of $\Xi$. See \cite{PutnamCstarKtheory00}.
\\
The action of translation induces an equivalence relation $R$ on $\Omega$ by declaring two tilings $T,T'\in\Omega$  to be equivalent if they are translates of each other i.e. if $T=T'+x$ for some $x\in\R^2$. Let $R'$ be the restriction of $R$ to $\Xi$, which is an equivalence relation on $\Xi$.
The set  $\Xi$ is a full transversal of $\Omega$ with respect to  $R$ because $[T]_R\cap \Xi\ne\emptyset$ is countable for any tiling $T\in\Omega$.
Observe that the natural map $\Xi\to\Omega/R$ given by $T\mapsto[T]_R$ is surjective but not injective.
\\
By \cite{PutnamCstarKtheory00}, we can construct the $C^*$-algebras $C^*(R')$, $C^*(R)$.
These $C^*$-algebras are strongly Morita equivalent because $\Xi$ is a transversal to $\Omega$ relative to $R$,  and $R'$ is the restriction of $R$ to $\Xi$, and because $\Xi$ satisfies the following three conditions:
%The equivalence relations  $R'$, $R$ are groupoid-equivalent and so their $C^*$-algebras are strongly Morita equivalent.
%They are groupoid-equivalent because there exists a groupoid $G$ such that $[\Xi]_G=[\Omega]_G$ and $R'\cong G|_{\Xi}$ and $R\cong G|_{\Omega}$.
%The topological conditions are also met. These follow because $\Xi$ satisfies the following three conditions:
(1) $T'\in\Omega\imply T'+x\in\Xi$ for some $x\in \R^2$. (2) $T'\in\Xi\imply \{T'+x\mid 0<|x|<\varepsilon\}=\emptyset$ for some $\varepsilon>0$. (3) $\Xi$ is closed in $\Omega$.
\\
The pair $((\Omega,d),\omega)$  is also a dynamical system of its own. The group action is $\Z$, where the homeomorphisms are $\omega^a$, $a\in\Z$. It is a Smale space.

\section{Combinatorial Tilings}
An \emph{$n$-cell} on a topological space $X$ is a subspace homeomorphic to the open $n$-disk $D^n:=\{v\in\R^n\mid ||v||<1\}$. Hence cells are open in $X$.
A \emph{cell decomposition} of a topological space $X$ is a partition of the space into $n$-cells.
\begin{defn}[CW-complex]
A pair $(X,\fE)$ consisting of a Hausdorff space $X$ and a cell decomposition $\fE$ of $X$ is called a \emph{CW-complex} if the following three conditions are satisfied:
\begin{enumerate}
  \item For each $n$-cell $e\in \fE$, there exists a continuous map $\Phi_e:\overline{D}^n\to X$ from the closed unit $n$-disk $\overline{D}^n$ to $X$ that takes the boundary $S^{n-1}$ into the $n-1$ skeleton $X^{n-1}$, and the restriction of this map to the interior $D^n$ is a homeomorphism.
  \item For any cell $e\in \fE$ the closure $\overline e$ intersects only a finite number of other cells.
  \item A subset $A\subset X$ is closed if and only if $A\cap \overline{e}$  is closed in $X$ for each $e\in \fE$.
\end{enumerate}
\end{defn}

A nice result from CW-complexes which will be used extensively is the following:  If $(X,\fE)$ is a CW-complex and $f:X\to Y$ is a map between two topological spaces, then $f$ is continuous if and only if $f$ restricted to each $n$-cell  is continuous if and only if $f$ restricted to each $n$-skeleton $X^n$ is continuous. Moreover, $X^n$ is obtained from $X^{n-1}$ by gluing the $n$-cells in $X$.

The dimension of a CW-complex is said to be $n$ if all the cells are of degree at most $n$.
A \emph{combinatorial tiling} is a 2-dimensional CW-complex  $(X,\fE)$ such that $X$ is homeomorphic to the open unit disk $\D:=D^2$.
The \emph{combinatorial tiles} are the closed 2-cells. We also say that a face is a closed 2-cell, an edge a closed 1-cell, and a vertex a  0-cell.
\begin{exam}
 If $T$ is a tiling of the plane, then $T$ has the structure of a 2-dimensional CW-complex, where the 2-cells are the interior of the tiles, the 1-cells are the interior of the edges of the tiles, and the 0-cells are the vertices of the edges of the tiles. Hence, under this identification, $(\C,T)$ is a combinatorial tiling.
\end{exam}

By the cellular approximation theorem, a continuous map between CW-complexes can be taken to be cellular, i.e. that it maps the $n$-skeleton of the domain to the $n$-skeleton of the range. However, the maps between CW-complexes that we will consider in this work are stronger than cellular maps, for they map cells to cells. We call such maps \emph{cell-preserving} maps.

We borrow the definition of subdivision of a combinatorial tiling from \cite{FloydFiniteSubdivisionRules01}.
\begin{defn}[subdivision of a combinatorial tiling]
  Let $(X,\fE)$ and $(X,\fE')$ be two combinatorial tilings with same topological space $X$.
  We say that $(X,\fE')$ is a subdivision of $(X,\fE)$  if for each cell $e'\in \fE'$,  there is a cell $e\in\fE$ such that $\overline{e'}\subset \overline{e}$.
\end{defn}

\begin{figure}[htbp]
  \begin{minipage}[b]{0.48\linewidth}
    \centering
    \includegraphics[width=\linewidth]{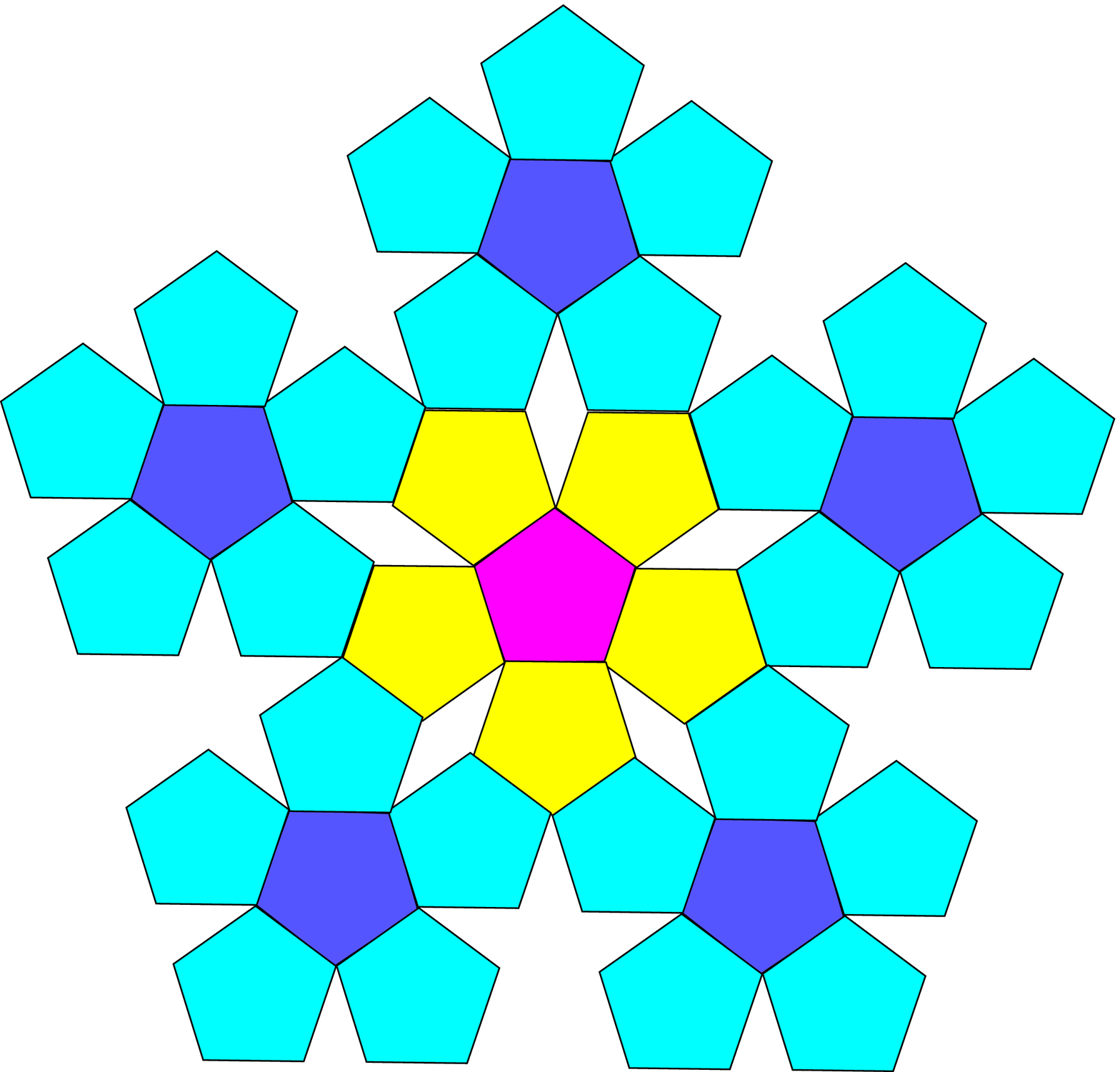}
    \caption{No regular pentagonal tiling on the plane.}
    \label{f:noregularpentagonaltiling}
  \end{minipage}
  \hspace{0.1cm}
  \begin{minipage}[b]{0.48\linewidth}
    \centering
    \includegraphics[width=\linewidth]{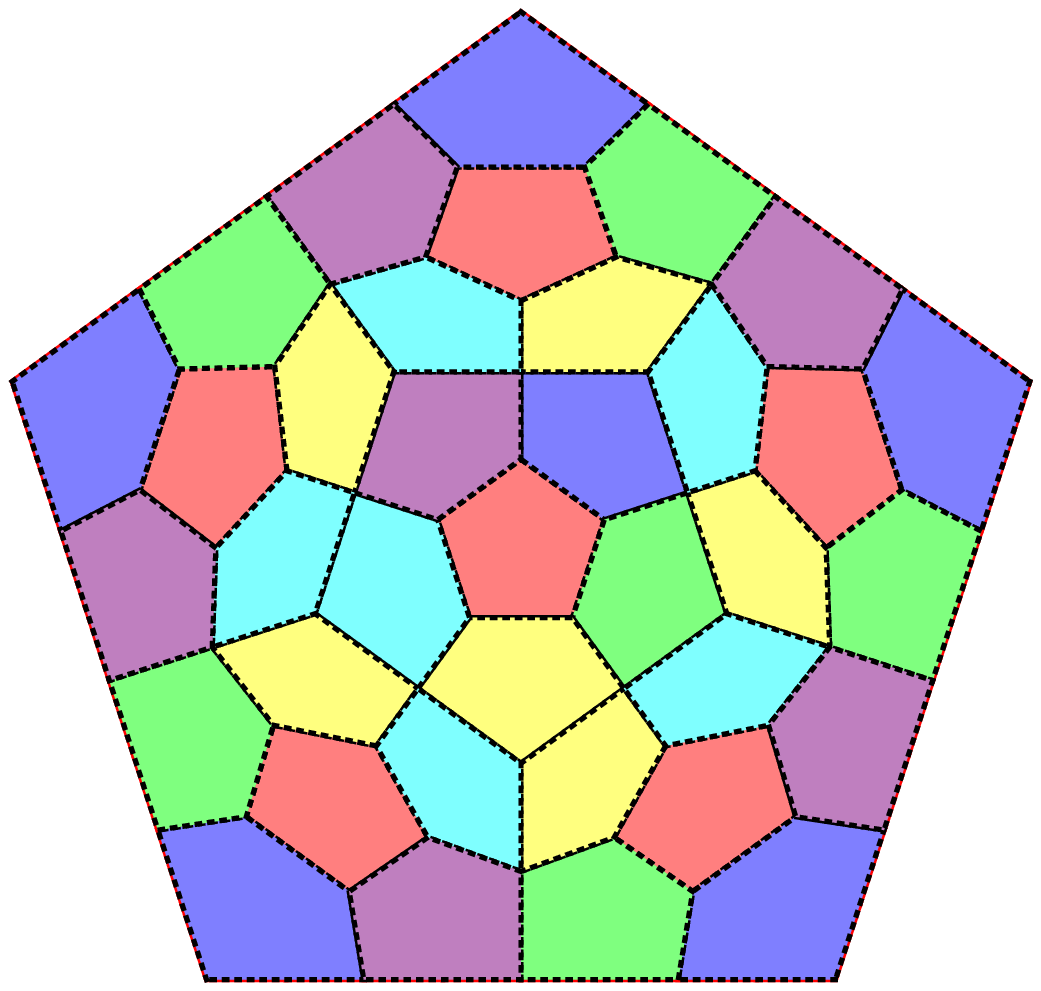}
    \caption{A pentagonal tiling.}
    \label{f:mypentagonaltiling}
  \end{minipage}
\end{figure}

\section{Conformal regular pentagonal tiling of the plane $T$}\label{sectionconstructingconformalT}
Figure \ref{f:noregularpentagonaltiling} shows that we cannot tile the plane with regular pentagons.
However, we could deform the pentagons and obtain a pentagonal tiling  like the one in Figure \ref{f:mypentagonaltiling}.
Although this tiling has many nice properties, we can hope for more. We can construct a tiling with the same combinatorics, but where the tiles are so called conformally regular pentagons, and the tiling looks like the one in Figure \ref{f:conformalregularpentagonaltiling}.
\begin{figure}
  % Requires \usepackage{graphicx}
  \centering
  \includegraphics[scale=.5]{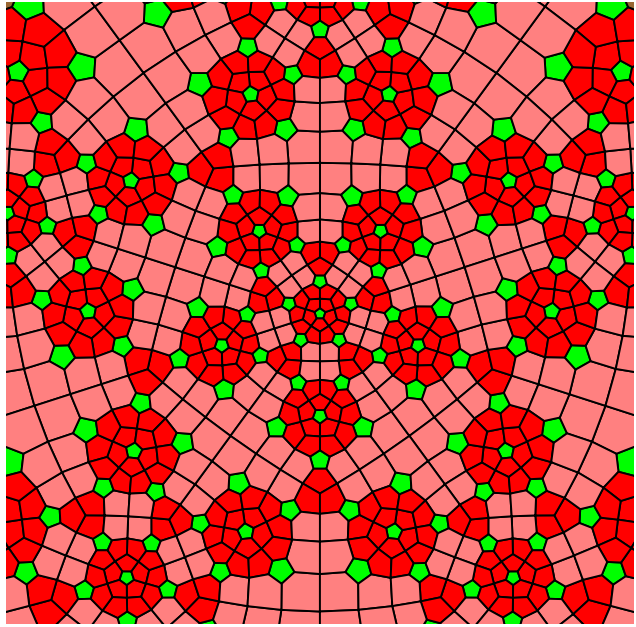}\\
  \caption{The tiling $T$, which is a conformal regular pentagonal tiling of the plane.}\label{f:conformalregularpentagonaltiling}
\end{figure}
The article "A regular pentagonal tiling of the plane" by Philip L. Bowers and Kenneth Stephenson in \cite{StephensonBowers97} gives a construction of this tiling using the theory of circle packings on the above combinatorics. They use circle packings to impose a natural geometry on the above combinatorics.
This gives the nice feature to the tiles of being almost round.
We give a brief summary of the construction and extract/deduce on our own some of the
remarkable combinatorial and geometric properties of the tiling.
Before we proceed, we need to introduce some terminology and results from Riemann surfaces.

\section{Riemann Surfaces}
A \emph{Riemann surface} is a connected Hausdorff topological space $S$ together with an analytic atlas, that is, a Riemann surface is just a one dimensional complex manifold. See pages 33, 39 in \cite{RiemSurfBeardonBook}.
A map $f:S\to S'$ between two Riemann surfaces is said to be analytic if for every chart $\phi_U:U\to\C$ of $S$ and every chart $\phi_V:V\to\C$ of $S'$, the map $\phi_V\circ f\circ \phi_U^{-1}$ is analytic. We should note that a chart $\phi_U$ is a homeomorphism onto its image.
Two Riemann surfaces are said to be \emph{conformally equivalent} if there is a bijective analytic map between them (hence an analytic homeomorphism).
A  connected Hausdorff topological space $S$ with an analytic atlas is always equivalent to the same space $S$  with a different analytic atlas, since the identity map gives a conformally equivalence between them. Hence if we treat conformally equivalent Riemann surfaces as identical, we do not need to worry about which atlas we equip $S$ with.
A \emph{topological closed disk} is a topological space homeomorphic to the closed unit disk. In particular it is compact.
For example a regular Euclidean pentagon is a topological closed disk.
A \emph{conformally regular pentagon } is a Riemann surface $P$ with boundary and five distinguished points on its boundary, which we call corners,  such that: (1) it is a topological closed disk, (2) the interior $P^\circ$ is conformally equivalent to the interior of a regular Euclidean pentagon $P'^\circ$, (3) the bijective analytic map from $P^\circ$ to $P'^\circ$ extends continuously to the boundary, mapping corners to corners.
 We should emphasize that conformally regular pentagons are analytic on their interior, extend continuously to the boundary and they are not conformal on the boundary, so the angles on the boundaries are not preserved.
An example of this is of course a regular Euclidean pentagon. Some more exotic examples of conformally regular pentagons are shown in Figure  \ref{f:conformalprototiles}, and later on we will show why this is the case.
From such figure, we notice that the interior angles on the boundary are not necessarily $2\pi/5$ nor equal, as it should be. However, they are all conformally equivalent to the regular Euclidean pentagon.

Thus we could say that conformally regular pentagons are objects constructed from a regular pentagon.
Two conformally regular pentagons are conformally equivalent on their interiors and homeomorphic on the whole and this homeomorphism preserves the cyclic ordering of their corners.

We will be working with conformally regular pentagons that are subsets of the complex plane. We call such pentagons \emph{conformally regular pentagons of the plane}.
We say that two conformally regular pentagons of the plane are \emph{extended conformally equivalent} if they extend to open sets which are conformally equivalent.
Two pentagons from Figure \ref{f:conformalprototiles} cannot be extended analytically beyond their boundaries since their interior angles do not match. Hence these are not extended conformally equivalent.

\section{Conformal substitution}
We name $T$ the tiling shown in Figure \ref{f:conformalregularpentagonaltiling}, which is also known as a conformal regular pentagonal tiling of the plane. The reason is that all its tiles are extended conformally equivalent to the pentagons shown in Figure \ref{f:conformalprototiles}, and so the boundary angles are preserved.
This tiling is a conformal substitution tiling.
That is, there is a substitution map $\omega$ with complex dilation $\lambda\in\C$ that replaces a tile $t$ with a patch $\omega(t)$ satisfying: (1) the tiles in $\omega(t)$ are conformal regular pentagons which are extended conformally equivalent to the prototiles; (2) the union of all the tiles of $\omega(t)$ is $\lambda t$. The prototiles are the three pentagons shown in Figure \ref{f:conformalprototiles}. The substitution map is shown in Figure \ref{f:conformalomega}, where the complex dilation constant is $\lambda=(-324)^{1/5}\approx 3.2 e^{i \pi/5}$ -  a  dilation by 3.2 and a rotation by $\pi/5$ (page 294 in \cite{CirclePackingStephenson}).
\begin{figure}
  % Requires \usepackage{graphicx}
  \centering
  \includegraphics[scale=.6]{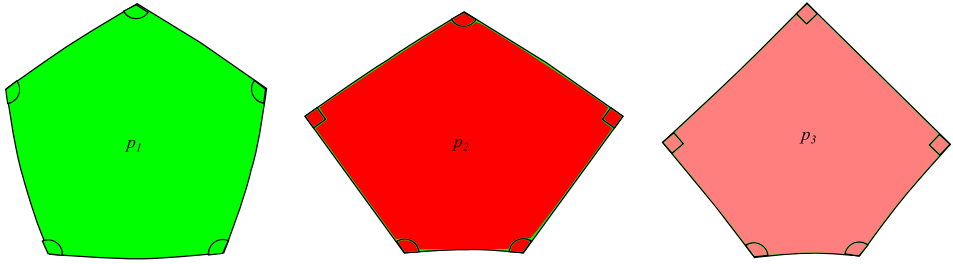}\\
  \caption{The prototiles of $T$. The interior angles are either $\pi/2$ or $2\pi/3$.}\label{f:conformalprototiles}
\end{figure}

\begin{figure}
  % Requires \usepackage{graphicx}
  \centering
  \includegraphics[scale=.6]{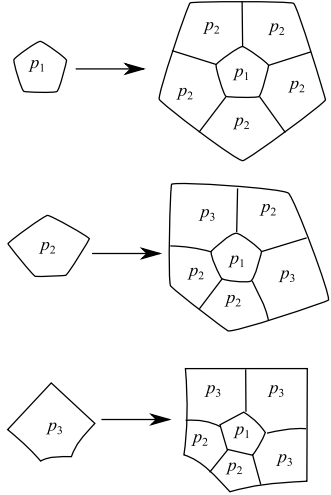}\\
  \caption{The substitution map for the tiling $T$.}\label{f:conformalomega}
\end{figure}
The fact that we are filling $\lambda t$ with conformal copies of the prototiles, and not with traslations/rotations of the prototiles,  is a sign of a generalization of the standard theory for substitution tilings.
The construction in \cite{StephensonBowers97} of the tiling $T$ is as follows.
\section{Construction of the conformal regular pentagonal tiling $T$.}\label{subs:constructionofT}

The story of the construction of $T$  involves five steps. The first one is to construct a combinatorial tiling $K$, which contains the combinatorics of $T$. We then equip it with a piecewise affine structure using regular pentagons, and then with a conformal structure to obtain a simply connected non compact Riemann surface.
Finally, using a generalization of the Riemann mapping theorem, we map this one dimensional complex manifold onto the plane to obtain our tiling $T$.
However, we need circle packing theory to see a drawing of the tiling $T$.
If no confusion arises we will use the same symbol $K$ to denote $K$ with any of the structures, but if we want to emphasize which structure we are referring to then we will write $K$, $\Kaff$, $\Kconf$ to denote that $K$ is equipped with the combinatorial, piecewise-affine, and conformal structure, respectively.
Recall that a combinatorial tiling is a CW-complex homeomorphic to the open unit disk, and so it is a pair consisting of a topological space and a partition.
Usually, no ambiguity arises by denoting the topological space and the partition with the same symbol, for if we talk about cells, we talk about the partition, and if we talk about points, we talk about the space.

The construction of the combinatorial tiling $K$ is the following.
Start with a \emph{combinatorial pentagon} $K_0$, which is a closed topological disk with five distinguished points on its boundary. We write combinatorial to emphasize that at this stage we only care about  combinatorics. That is we can think of it abstractly as one face with five edges and five vertices. Using the combinatorial subdivision rule from Figure \ref{f:subdivisionmap}, we subdivide $K_0$ into six combinatorial pentagons.
The result is a combinatorial flower $K_1$, which is shown in Figure \ref{f:K0K1K2}. We identify $K_0$ with the central pentagon of $K_1$, and we write $K_0\subset K_1$. Repeat this subdivision for the new six pentagons to obtain the combinatorial superflower $K_2$ shown in Figure \ref{f:K0K1K2}.
We identify $K_1$ with the central flower of $K_2$ and so we have $K_0\subset K_1\subset K_2$. Repeating this subdivision $n$ times we obtain an increasing sequence of combinatorial superflowers $K_0\subset K_1\subset \cdots K_n$. The union of all $K_n$ is a combinatorial tiling whose tiles are combinatorial pentagons, each of them attached as in Figure \ref{f:conformalregularpentagonaltiling}.
An alternative construction of $K_n$ from $K_{n-1}$ is using a so called reflection rule, which is shown in Figure \ref{f:reflectionrule}. This rule consists in (1) reflecting the central pentagon $K_0$  across each of its five edges; then (2) the two edges coming out from the central pentagon are identified with each other, and so 5 identifications are made, each corresponding to a corner of the central pentagon, as seen in Figure \ref{f:reflectionrule}. The result is $K_1$. Repeat the same reflection procedure of the superpentagon $K_1$ across each of its five superedges and glue them as before. The result is $K_2$, and so on: Reflecting $K_{n-1}$ across each of its superedges and glue them as in Figure \ref{f:reflectionrule} we get $K_n$.

\begin{figure}
  % Requires \usepackage{graphicx}
  \centering
  \includegraphics[scale=1]{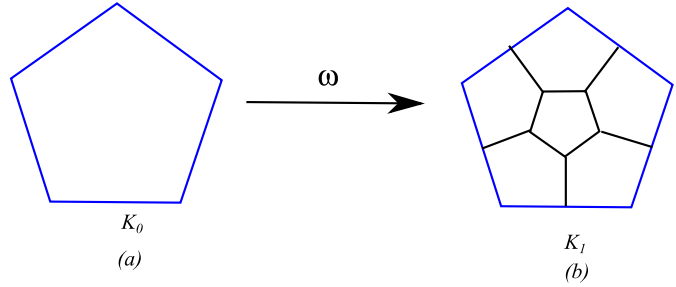}\\
  \caption{Subdivision map. }\label{f:subdivisionmap}
\end{figure}

\begin{figure}
  % Requires \usepackage{graphicx}
  \centering
  \includegraphics[scale=.5]{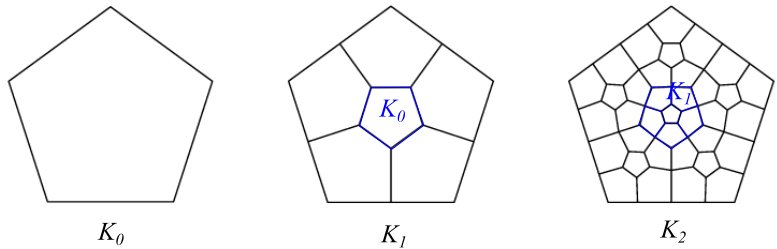}\\
  \caption{The central pentagon $K_0$, combinatorial flower $K_1$, and superflower $K_2$.}\label{f:K0K1K2}
\end{figure}
\begin{figure}
  % Requires \usepackage{graphicx}
  \centering
  \includegraphics[scale=.7]{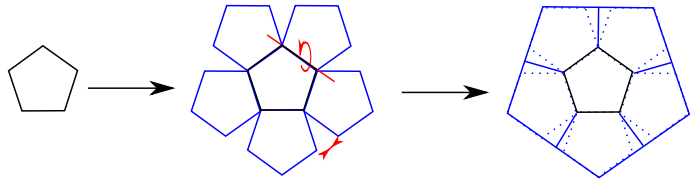}\\
  \caption{Reflection rule.}\label{f:reflectionrule}
\end{figure}

In summary, the combinatorial tiling $K$ is a CW-complex whose cells are shown in Figure \ref{f:conformalregularpentagonaltiling} and whose topological space is (homeomorphic to)  $\C$. It has a central pentagon $K_0$, and its group of combinatorial automorphisms (i.e. cell-preserving homeomorphisms) $Aut(K)$ is the dihedral group $D_5$, which is composed of five rotations with respect to $K_0$, and five reflections with respect to $K_0$ and a vertex of $K_0$. Another important property of the combinatorial tiling $K$ is that  its vertices have either degree $3$ or $4$; that is, each vertex of $K$ is a vertex of either 3 faces or 4 faces.

We now equip $K$ with a piecewise affine structure:
Equip the one skeleton $K^1$ with the unit edge metric, making each edge isometric to the unit interval.
Then extend this metric to faces so that each face is isometric to a regular pentagon of side-length 1.
The distance between two points is defined to be the length of the shortest path between them.
This amounts to replacing each pentagon of $K$ with Euclidean regular pentagons of side-length $1$, all glued along their edges according to the combinatorics of $K$. Observe that on each pentagon we use the Euclidean metric and that the resulting metric ensures compatibility with the combinatorial structure of $K$; for example, the isometric cell-preserving automorphisms of $K$ are still the five rotations and reflections of the dihedral group $D_5$.

As a side note, we wondered how the piecewise affine space $\Kaff$ looked like.
The pentagonal flower $K_1$ is simply a half-dodecahedron. The superflower $K_2$ should look like Figure \ref{f:K2in3D}, but unfortunately, Mathematica shows otherwise. The three degree vertices and flatness of the pentagons, force us to glue a face of 5 dodecahedrons around 5 faces of a dodecahedron, which is not possible - there are gaps in between. The dihedral angle of a dodecahedron is $\arccos \tfrac{-1}{\sqrt5}$, while the angle to be filled is of $\pi -\arccos\tfrac{3}{5}$, which leaves a gap of about 10 degrees.
\begin{figure}[htbp]
  \begin{minipage}[b]{0.48\linewidth}
    \centering
    \includegraphics[width=\linewidth]{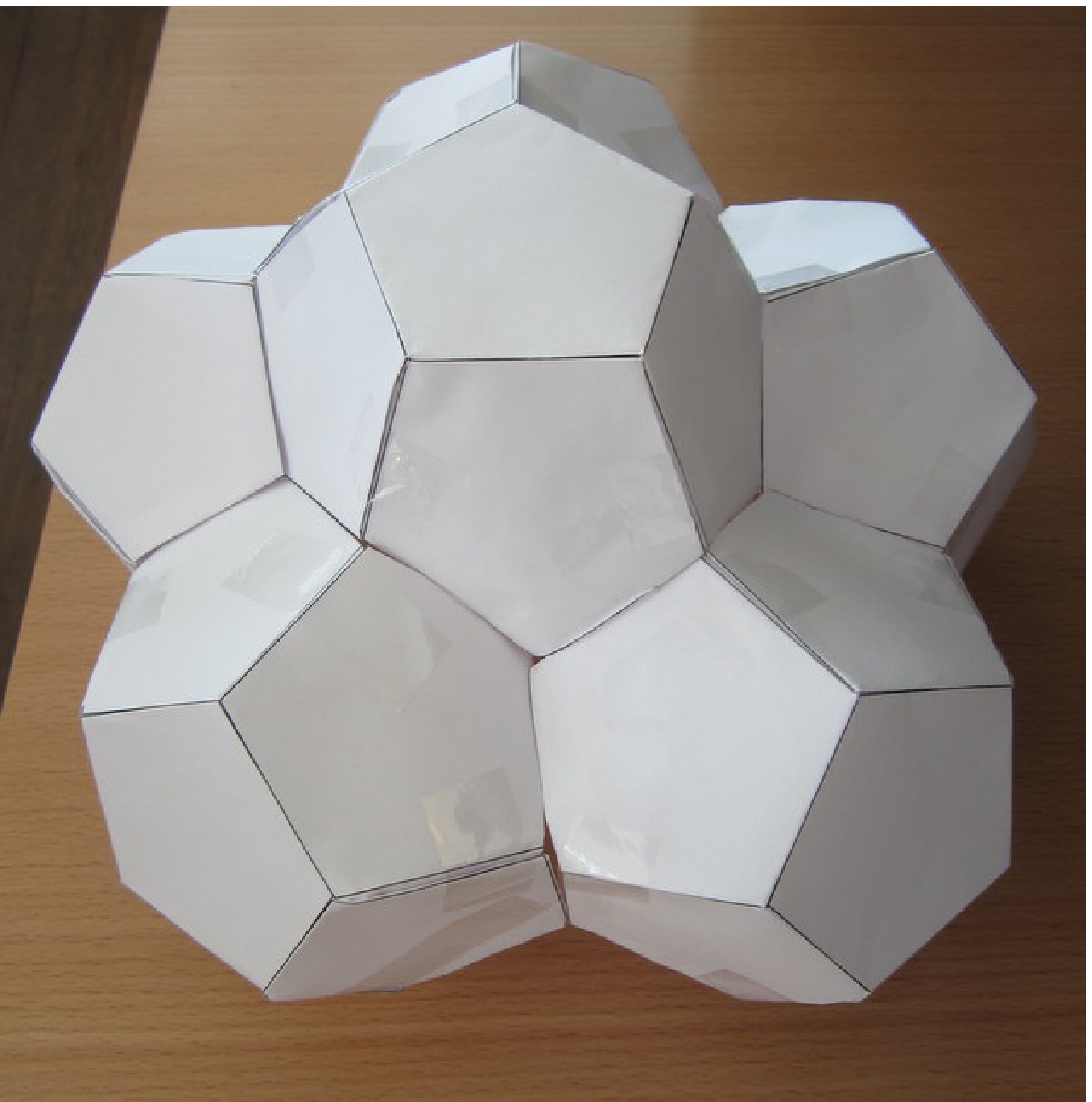}
    \caption{$K_2$ as a gluing of six half dodecahedrons.}
    \label{f:K2in3D}
  \end{minipage}
  \hspace{0.2cm}
  \begin{minipage}[b]{0.48\linewidth}
    \centering
    \includegraphics[width=\linewidth]{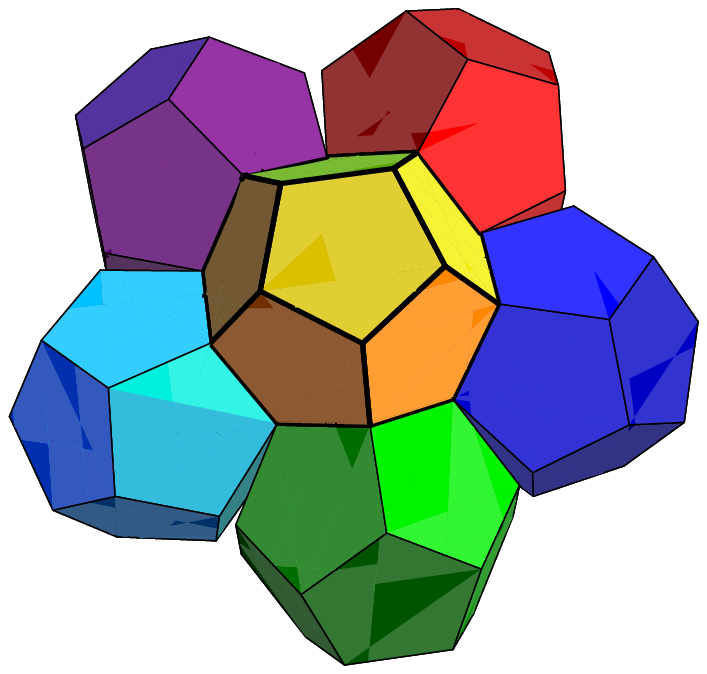}
    \caption{$K_2$ is not the gluing of six half dodecahedrons.}
    \label{f:K2in3Dback}
  \end{minipage}
\end{figure}

We now equip $\Kaff$ with a conformal structure: The piecewise affine space $\Kaff$  is a nonflat surface. The corners and edges will be smoothed by charting them into the plane. For each edge and each vertex we define a chart:
Suppose that $e$ is an oriented edge of $\Kaff$ with pentagon $P_1\in \Kaff$ on its left and pentagon $P_2\in\Kaff$ on its right. See Figure \ref{chartphie}. Define  the open set $U_e:=(P_1\cup P_2)^\circ$ as the interior of the union of the pentagons. We now introduce a coordinate system on $U_e$ by identifying $e$ with the interval $]-1/2,1/2[$ and $P_1$ and $P_2$ with the two regular pentagons $Q_1,Q_2$ that have in common this interval with $Q_1$ being above the $x$-axis and $Q_2$ below. We define $V:=(Q_1\cup Q_2)^\circ\subset \C$ as the interior of the union of these two pentagons; see Figure \ref{chartphie}. The identification map $\phi_e:U_e\to V$ is given by $\phi_e(z):=z$.
The map $\phi_e$ is the chart containing edge $e$, it is indexed by it, and it is an isometry.
The intersection of two domains $U_e$, $U_{e'}$ is (1) is itself if $e$ and $e'$ are the same as nonoriented edges; (2) is a pentagon $P$ if $e$ and $e'$ are two edges of $P$; (3) is the empty set otherwise. For the first case, the transition map is $\phi_e\circ\phi_{e'}^{-1}(z)=-z$ if $e$ and $e'$ have opposite orientation, else $\phi_e\circ\phi_{e'}^{-1}(z)=z$.
For the second case, the transition map is $\phi_e\circ\phi_{e'}^{-1}(z)=(z-z_0) e^{m 2\pi i/5}+z_0 $, where $z_0=\pm\frac{1}{4} i (1+\sqrt{5})$ is the baricenter of the pentagon, $m=0,1,2,3,4$ is the number of times we have to rotate (around the baricenter $z_0$ of the pentagon) the edge $e'$ to match edge $e$, but this is assuming that both edges have same orientation; if they have opposite orientations then $\phi_e\circ\phi_{e'}^{-1}=\phi_e\circ\phi_{-e'}^{-1}(-z)$, where $-e'$ is the edge in the opposite direction of $e'$. See Figure \ref{f:chartphieep}. Hence, the transition maps for charts on edges are all analytic.

\begin{figure}[htbp]
  \begin{minipage}[b]{0.48\linewidth}
    \centering
    \includegraphics[width=\linewidth]{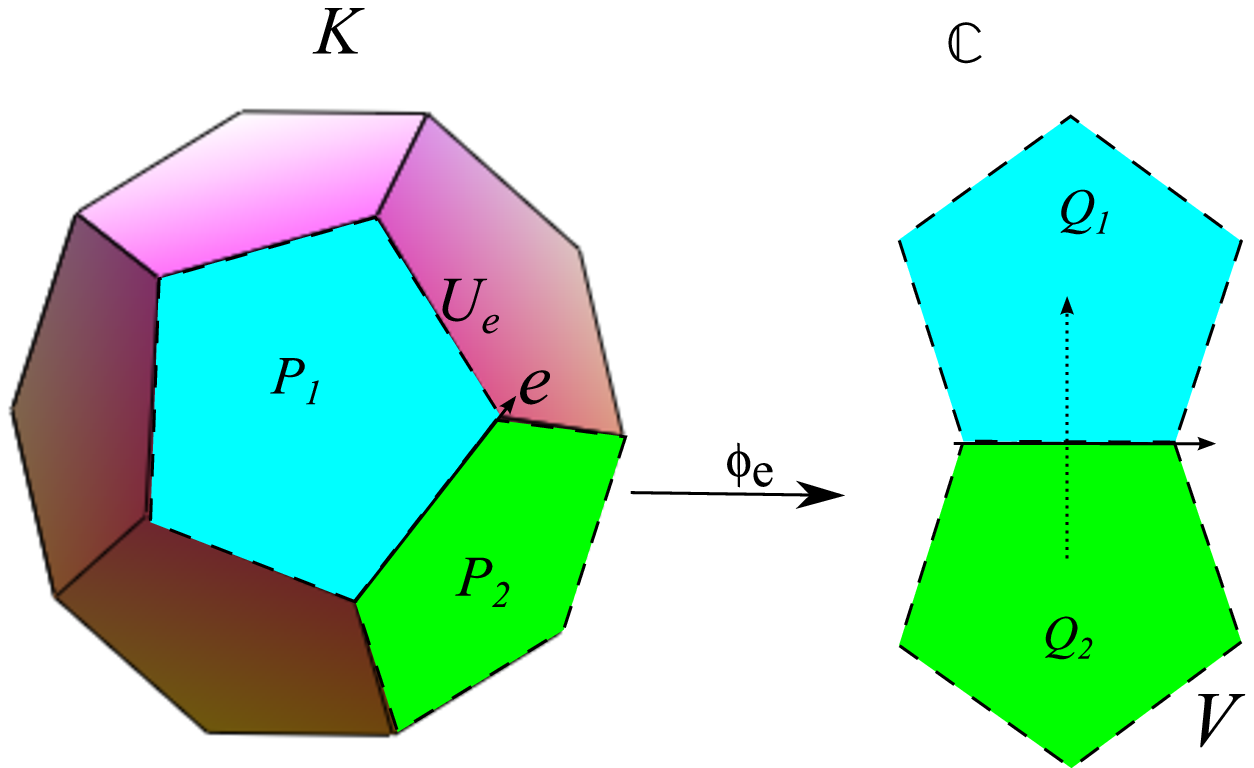}
    \caption{The chart $\phi_e:U_e\to V$, where $U_e=(P_1\union P_2)^\circ$.}
    \label{chartphie}
  \end{minipage}
  \hspace{0.2cm}
  \begin{minipage}[b]{0.48\linewidth}
    \centering
    \includegraphics[width=\linewidth]{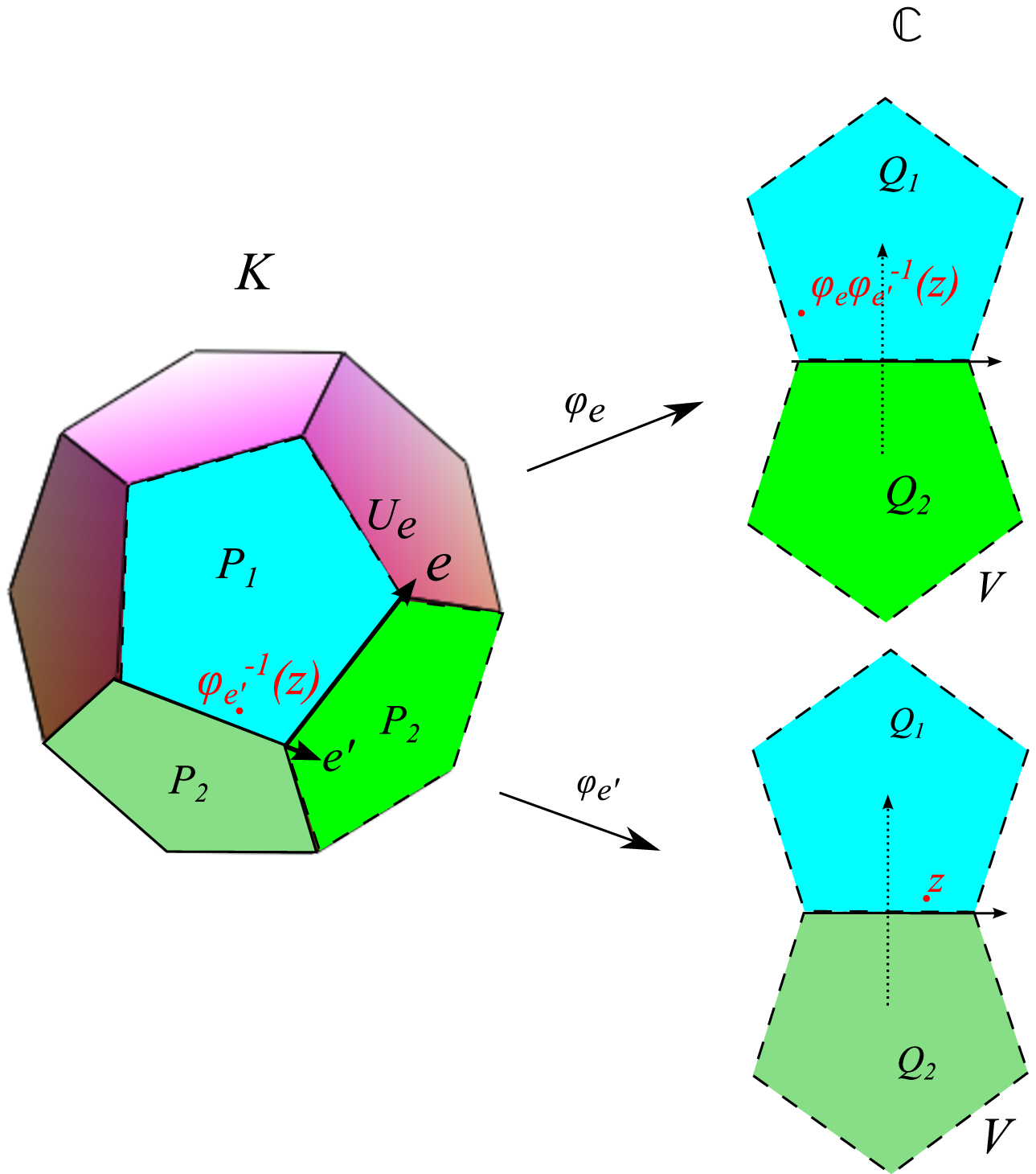}
    \caption{The transition map $\phi_e\phi_{e'}^{-1}(z)=(z-z_0) e^{2\pi i 4/5}+z_0 $, where $z_0=i(1+\sqrt5)/4$.}
    \label{f:chartphieep}
  \end{minipage}
\end{figure}

If $v$ is a $d$-degree vertex of $\Kaff$ and $U_v$ is the open metric ball of radius $1/3$ centered at $v$, and $V'\subset \R^2$ is the open ball of the plane of radius $1/3$ centered at the origin, then the chart $\phi_v:U_v\to V'$ is defined by
$\phi_v(z)=z^{10/(3d)}$, where $z:=r e^{i\theta}$, and $0\le \theta\le \frac{3\pi}{5 }d$ and $0\le r<\frac13$ and the degree of the vertex is $d=3,4$. See Figure \ref{chartphiv} and Figure \ref{chartphivflatten}.
Observe that $\phi_v(r e^{d\frac{3\pi}{5}i})=r^{10/(3d)} e^{d\frac{3\pi}{5}i\frac{10}{3d }}=r^{10/(3d)}e^{2\pi i}=r^{10/(3d)}=\phi_v(r)$, as expected.
Notice that we could as well have used open balls of radius $1/2$ instead of open balls of radius $1/3$ around the vertices.
Observe that the radius of the ball $V'$ is  $(1/3)^{10/(3d)}$ which takes the values $0.295$ and $0.40$.

\begin{figure}[htbp]
  \begin{minipage}[b]{0.48\linewidth}
    \centering
    \includegraphics[width=\linewidth]{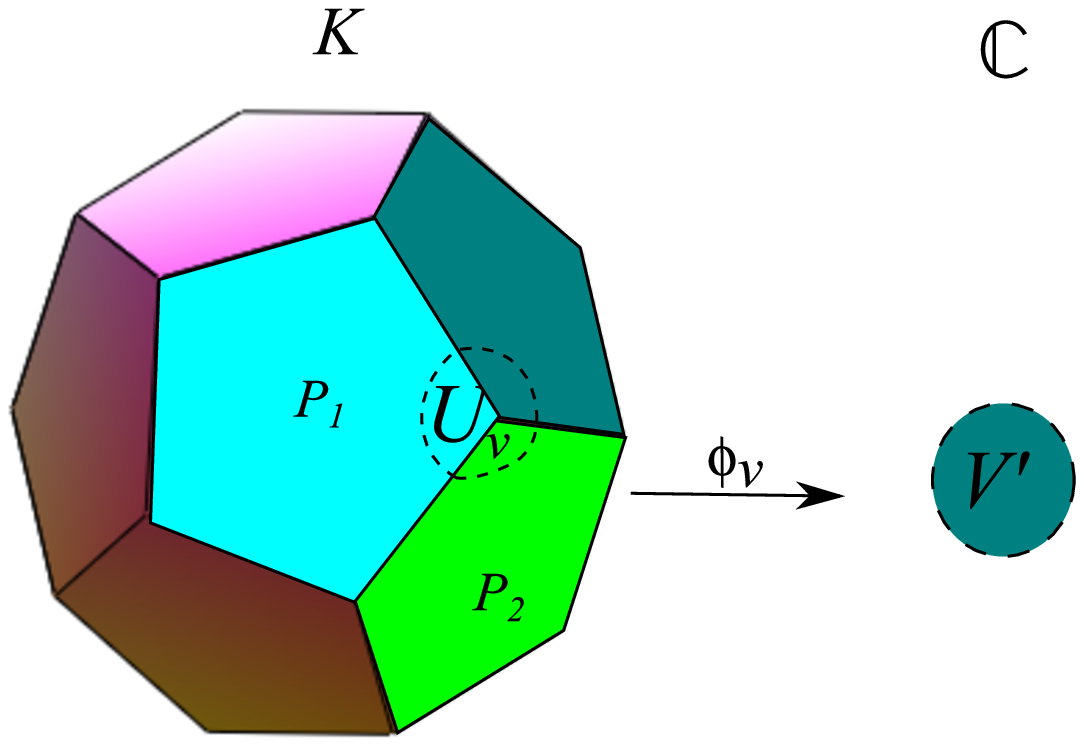}
    \caption{The chart $\phi_v:U_v\to V'$.}
    \label{chartphiv}
  \end{minipage}
  \hspace{0.2cm}
  \begin{minipage}[b]{0.48\linewidth}
    \centering
    \includegraphics[width=\linewidth]{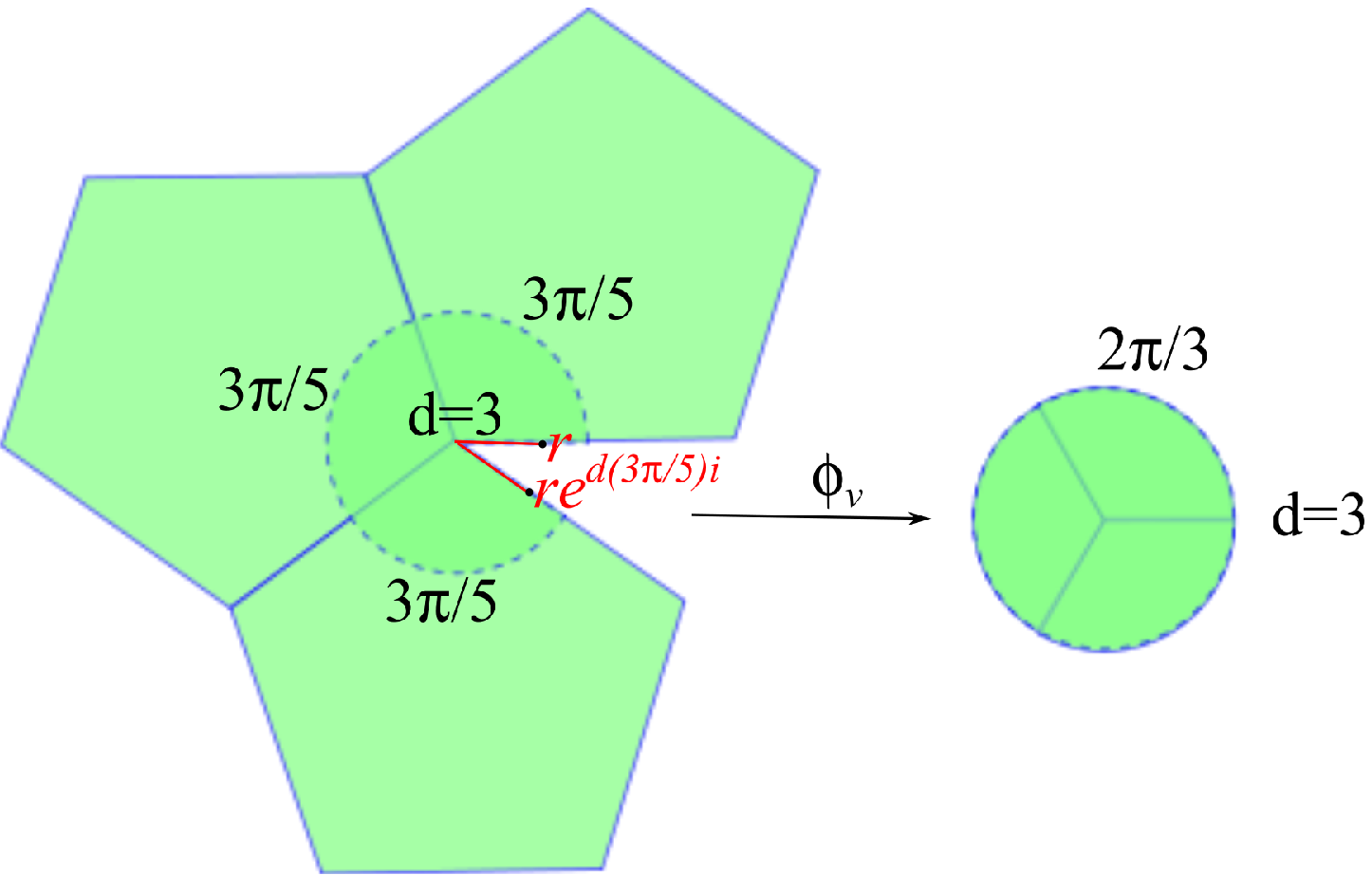}
    \caption{The flatten version of the chart $\phi_v:U_v\to V'$ when $d=3$.}
    \label{chartphivflatten}
  \end{minipage}
\end{figure}

The intersection of the domains $U_v,U_{v'}$ for the charts $\phi_v,\phi_{v'}$ for distinct vertices is always empty.
However, the intersection of the domains $U_e,U_v$ is nonempty whenever $v$ is a vertex of the edge $e$. For such case, the transition maps
$\phi_e\circ \phi_v^{-1}: \phi_v(U_e\intersection U_v)\to \phi_e(U_e\intersection U_v)$,
$\phi_v\circ \phi_e^{-1}: \phi_e(U_e\intersection U_v)\to \phi_v(U_e\intersection U_v)$
are also analytic. For example, if the "$x$-axis" on the ball $U_v$ matches the edge $e$ then $\phi_v\circ \phi_e^{-1}(z)=(z-z_e)^{(10/3d)}$ and $\phi_e\circ \phi_v^{-1}=( \phi_v\circ \phi_e^{-1})^{-1}$, where $z_e:=\phi_e\circ\phi_v^{-1}(0)$. See Figure \ref{f:chartphiev}.
Since the transition maps are analytic maps, we have constructed an atlas - our conformal structure, and thus we have turned $\Kaff$ into a simply connected noncompact Riemann surface $\Kconf$. This structure also preserves the combinatorial structure. For instance, the analytic cell-preserving homeomorphisms on $\Kconf$ are still the five rotations and five reflections of the dihedral group $D_5$.
\begin{figure}
  % Requires \usepackage{graphicx}
  \centering
  \includegraphics[scale=.7]{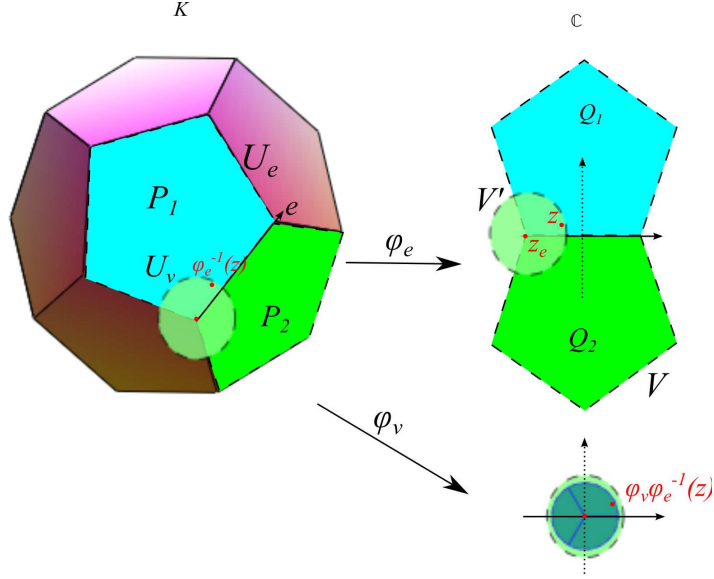}\\
  \caption{A transition map $\phi_v\circ\phi_e^{-1}(z)=(z-z_e)^{10/(3d)}$ with $d=3$.}\label{f:chartphiev}
\end{figure}

The creation of our tiling is obtained as follows.
A generalized version of the Riemann mapping theorem (called the uniformization theorem) tells us that there is an essentially unique analytic map $\Phi$ that sends our Riemann surface bijectively onto either the Euclidean plane or hyperbolic disk.
We define our conformally regular pentagonal tiling of the plane  $T:=\{\Phi(f)\mid f\in K\}$, and so $(\C,T)$ or $(\D,T)$ is a CW-complex, and $\Phi$  becomes a cell-preserving map.
By Schwarz reflection principle, the subdivision map (or more precisely the reflection map) $\omega:K\to K$ induces an analytic cell-preserving homeomorphism $\omega:\Kconf\to \Kconf$, and thus $\alpha:=\Phi\circ\omega\circ\Phi^{-1}$ is a cell-preserving analytic homeomorphism. The fact that $\omega(K)=K$ (i.e. the self similarity of $K$) implies that $\alpha$ increases area.  Since none of the automorphisms on the hyperbolic disk increase area, $\alpha$ must be an automorphism of $\C$. Hence the image of $\Phi$ is the Euclidean plane $\C$.

 The map $\Phi:\Kconf\to\C$ tells us how to flatten our Riemann surface $\Kconf$ onto the Euclidean plane.
Unfortunately, the Riemann mapping theorem (and its generalized version) is an existence theorem. It does not tell us how to draw it.
We use the theory of circle packings to do this, which is explained in full detail in \cite{CirclePackingStephenson}.
First we triangulate our space $\Kaff$ by triangulating each pentagon as follows: Put a vertex on the center of the pentagon and join this vertex with every corner of the pentagon. The resulting triangulation of $\Kaff$ is denoted by $K'$.
The analytic atlas of $\Kconf$ is also an analytic atlas of $K'$ and so $K'$ is a Riemann surface which is conformally equivalent to $\Kconf$.
Circle packing theory gives us an essentially unique collection of Euclidean circles $P_{K'}$ whose centers in $\C$ are indexed by the vertices of this triangulation.
These circles will not overlap, and will follow the combinatorics of $K'$ in the sense that every vertex corresponds to a circle, every edge corresponds to two mutually tangent circles, and every face (i.e. triangle) corresponds to three mutually tangent circles, all respecting the orientation.
The carrier $Carr(P_{K'})\subset \C$ is defined as the CW-complex whose 2-cells are the Euclidean triangles in $\C$  induced by the faces of $K'$.
The carrier is a piecewise affine space. Since it has the same combinatorics of $K'$, we can define a piecewise affine cell-preserving map $\Phi_1:K'\to Carr(P_{K'})$.
The uniform lower bounds on the triangles in $Carr(P_{K'})$ and $K'$ implies that $\Phi_1$ is a $k$-quasi conformal map for some $k>1$. Recall that $k$-quasiconformal (or $k$-qc) maps are generalizations of analytic maps. A $1$-qc map is actually an analytic map, and composition of a $k'$-qc map and a $k''$-qc map is a $k'k''$-qc map, for $k',k''\ge 1$.
Hence, $\Phi_1\circ \Phi^{-1}:\C\to Carr(P_{K'})$ is a $k$-qc map. By Liouville's theorem, there is no quasiconformal mapping of $\C$ onto a proper subset of $\C$, and so $Carr(P_{K'})=\C$.
We now refine the affine space $K'$ into the triangulation $K''$ by introducing a vertex at the midpoint of each edge of $K'$ and then joining these vertices together with new edges. We say that $K''$ is a hexagonal refinement of $K'$.
By the same procedure we obtain a bijective $k_2$-qc map $\Phi_2:K''\to \C$, and by induction we obtain a sequence of bijective $k_n$-qc maps $\Phi_n:K^{(n)}\to \C$.  The maps $\Phi_n\circ\Phi^{-1}$ are also known as discrete conformal mappings.
Since we are doing hexagonal refinement, all $k_n$ are the same as $k$, and therefore  $\Phi_n\circ\Phi^{-1}$ converges to some $k$-qc map $\Psi$, (after some normalization).
By the Rodin-Sullivan Theorem (Theorem 19.1 in \cite{CirclePackingStephenson}, also known as Thurston's conjecture) $\Psi$ restricted to any triangle of $K'$ is analytic and thus, $\Psi$ is analytic everywhere except on a set of measure zero,  which implies that $\Psi$ is analytic everywhere.

Define $T_n:=\{\Phi_n(\Kaff)\mid f\in \Kaff\}$. We say that $T_n$ is an approximation of $T$.
The construction of the first approximation $T_1$ of $T$ (i.e. $n=1$) is shown in Figure \ref{f:approxTstages} and is done as follows:
each $Carr(P_{K_i'})$ is normalized so that their central pentagons agree, where $K_i'$ is the triangulation of $K_i\subset K$; their union converge to the space $Carr(P_{K'})$.
\begin{figure}[htbp]
  \begin{minipage}[b]{0.48\linewidth}
    \centering
    \includegraphics[width=\linewidth]{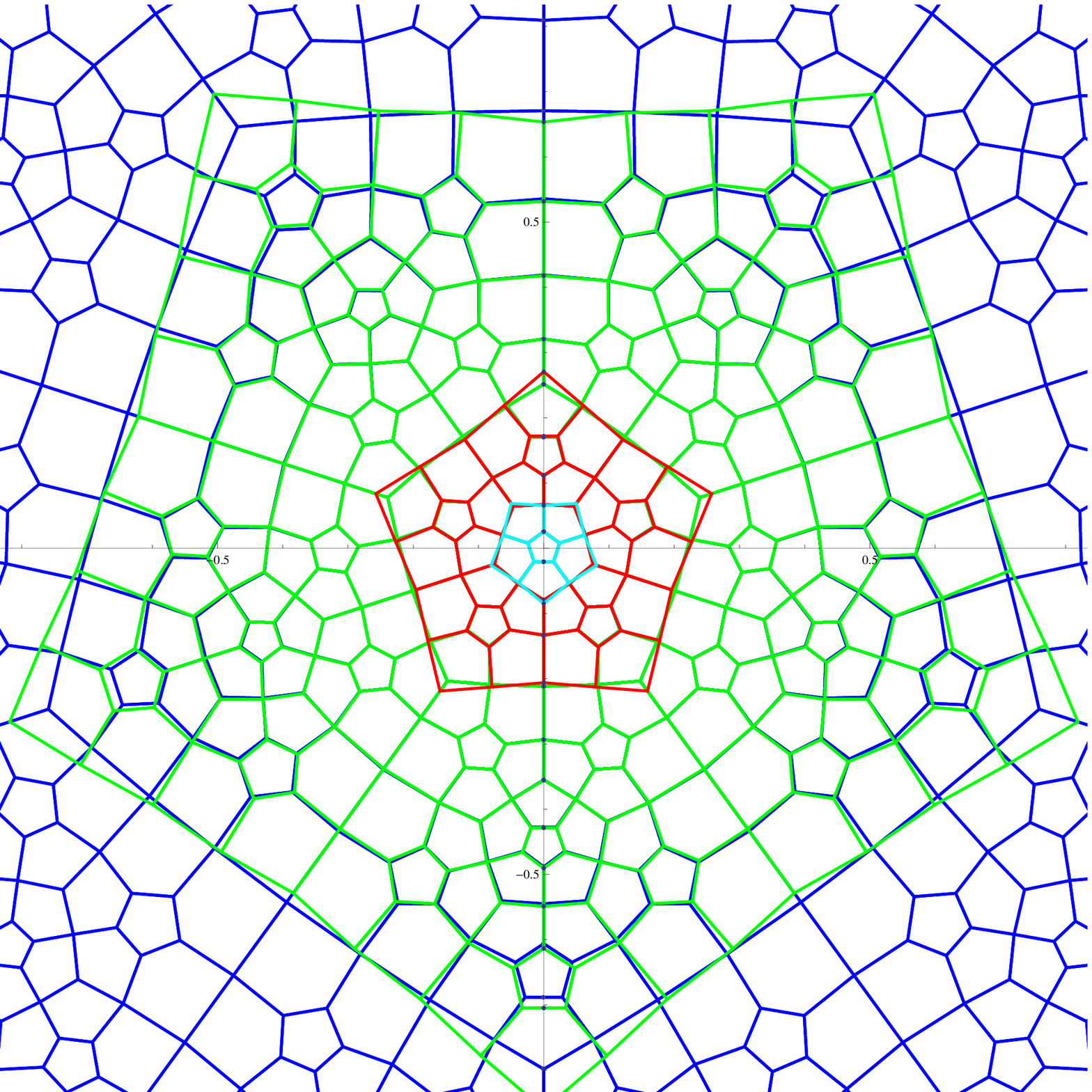}
    \caption{The approximation $T_1$ of $T$.}
    \label{f:approxTstages}
  \end{minipage}
  \hspace{0.2cm}
  \begin{minipage}[b]{0.48\linewidth}
    \centering
    \begin{overpic}[width=\linewidth]{./pics/TandaT}
  \put(85,148){\tiny red}
  \end{overpic}
    \caption{The tilings $T$ (the blue one) and $\lambda T$ (the red one) where $\lambda=|\lambda|e^{(\pi/5) i}$, $|\lambda|=18^{2/5}$.}
    \label{TandaT}
  \end{minipage}
\end{figure}

\noindent
The properties of our tiling $T$ are listed in the following Proposition.
Recall that the automorphism of $\C$ are the complex linear maps.
Two subsets of the plane are said to be \emph{euclideanly similar} if there is a conformal automorphism of $\C$ that maps one subset to the other.
Thus, euclideanly similarity filters out location, scale and rotational effects from the subsets when comparing them.
\begin{pro}\label{p:propertiesofT}
  Our conformal regular pentagonal tiling $T$ has the following properties.
  \begin{itemize}
    \item [(1)] Each  tile of $T$ is extended conformally equivalent to one of the three pentagons shown in Figure \ref{f:conformalprototiles}. We call these three pentagons, the prototiles of $T$.
    \item [(2)] With a single tile $\tau\in T$, we can reconstruct the entire tiling $T$.
    \item [(3)] Each tile $\tau\in T$ is euclideanly similar to at most ten tiles in $T$.
    \item [(4)] The tiling $T$ is aperiodic.
    \item [(5)] The support of the 1-skeleton $(\lambda T)^1$ is a subset of the support of the one-skeleton $T^1$, where $\lambda=(-324)^{1/5}=18^{2/5}e^{i \pi/5}\approx 3.17 e^{i \pi/5}$. See Figure \ref{TandaT}.
    \item [(6)] From the circle packing approximation $T_1$ of $T$ we can obtain good approximations of the tiles of $T$.
    \item [(7)] There are infinitely many tiles of diameter less than $n$ for some $n>1$.
    \item [(8)] The set of diameters of the tiles of $T$ is an unbounded set.
  \end{itemize}
\end{pro}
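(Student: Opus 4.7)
The plan is to push the structural features of $\Kconf$ through the uniformization $\Phi$ and exploit the conjugation $\Phi\circ\omega=\alpha\circ\Phi$ with $\alpha(z)=\lambda z$. For (1), I would unpack the chart atlas: away from vertices the charts on $\Kaff$ are Euclidean isometries, while at a vertex $v$ of degree $d\in\{3,4\}$ the chart $\phi_v(z)=z^{10/(3d)}$ opens the cone angle so that each pentagon corner at $v$ has interior angle $\frac{10}{3d}\cdot\frac{3\pi}{5}=\frac{2\pi}{d}\in\{\pi/2,2\pi/3\}$; enumerating the cyclic sequences of five such angles that arise in $K$ modulo the dihedral action yields the three conformal types of Figure~\ref{f:conformalprototiles}. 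The open neighborhoods provided by the atlas witness extended conformal equivalence, so each tile is extended conformally equivalent to one of the three prototiles. For (2), $\tau$ identifies $\Phi^{-1}(\tau)\in K$; the combinatorial $K$ is reconstructed by iterated reflection (Figure~\ref{f:reflectionrule}), the piecewise affine and conformal structures are prescribed canonically by the unit-edge pentagon rule and the explicit chart formulas, and $\Phi$ is essentially unique, so the located oriented $\tau$ normalizes away the remaining $\mathrm{Aut}(\C)$-ambiguity and determines $T$.

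For (3), I would show that the group of analytic cell-preserving automorphisms of $\Kconf$ is exactly $D_5$: the construction is $D_5$-equivariant, and no additional cell-preserving symmetry is compatible with the subdivision rule. Conjugating this action by $\Phi$ yields a faithful action of $D_5$ on $T$ by euclidean similarities, giving up to ten similar tiles per orbit. Conversely, any conformal similarity $\psi$ of $\C$ taking one tile of $T$ to another tile of $T$ sends boundary edges to edges and, by adjacency rigidity, propagates to a global automorphism of $T$; pulling back by $\Phi$ gives an analytic cell-preserving automorphism of $\Kconf$, hence an element of $D_5$. Thus each tile has at most $|D_5|=10$ euclideanly similar tiles in $T$, proving (3). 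For (4), any translation $z\mapsto z+x$ preserving $T$ would lie in the $D_5$-image by this argument, but no nontrivial translation is a rotation or reflection about a fixed point/axis, so $x=0$.

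For (5), each tile of $\lambda T$ equals $\lambda\Phi(f)=\Phi(\omega(f))$, a patch of six tiles of $T$; its boundary is therefore contained in $T^1$, and taking the union over $f\in K$ yields $(\lambda T)^1\subset T^1$. For (6), the Rodin--Sullivan theorem (Theorem~19.1 in \cite{CirclePackingStephenson}) provides uniform convergence of $\Phi_n\circ\Phi^{-1}$ to the identity on compacta after normalization, so the tiles of the circle-packing approximation $T_n$ converge in the Hausdorff sense to the tiles of $T$, giving the claimed approximations. For (8), iterating the substitution on any tile $\tau$ produces $\omega^n(\tau)$: a patch of $6^n$ tiles with total area $|\lambda|^{2n}\,\mathrm{area}(\tau)$; since $|\lambda|^2/6>1$, by pigeonhole at least one tile has area at least $(|\lambda|^2/6)^n\,\mathrm{area}(\tau)$, which grows unboundedly, so diameters are unbounded. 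For (7), I would run the inverse substitution: every tile lies in $\omega(\tau^*)$ for some $\tau^*\in T$, and inspection of the six sub-tile sizes in Figure~\ref{f:conformalomega} shows that at least one sub-tile is strictly smaller than its generator; iterating this selection from $\tau_0$ produces an infinite sequence of distinct tiles of geometrically decreasing diameter, hence infinitely many tiles of diameter bounded by $\mathrm{diam}(\tau_0)$.

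The main obstacle is the adjacency-propagation step of (3), which is also central to (4). Showing that a single-tile conformal similarity extends to a tiling-wide automorphism requires the rigidity of conformally regular pentagons: the image of each boundary edge must be determined by the image of its pentagon, so that adjacency across that edge is preserved. Establishing this rigidity in turn underlies the uniqueness of $\Phi$ used in (2) and the identification of $\mathrm{Aut}(\Kconf)$ with $D_5$ used in (3) and (4), as well as the interpretation of the scaling $\alpha(z)=\lambda z$ as the unique analytic extension of the combinatorial substitution.
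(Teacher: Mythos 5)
Your treatment of parts (1)--(6) follows essentially the same route as the paper: chart angles at the vertices for (1), reflection across analytic edges for (2), the ``a similarity between two tiles extends to an automorphism of $T$, hence comes from $D_5$'' argument for (3) and (4), and the conjugation $\Phi\circ\omega=\alpha\circ\Phi$ with $\alpha(z)=\lambda z$ for (5). Your area--pigeonhole argument for (8) is a genuinely different and arguably more self-contained proof than the paper's, which instead iterates the constant $b\approx 1.3$ obtained in part (7): since $|\lambda|^2=18^{4/5}\approx 10.1$ and $\omega$ produces six sub-tiles, one sub-tile of $\omega^n(\tau)$ has area at least $(|\lambda|^2/6)^n\,\mathrm{area}(\tau)\to\infty$, which is clean and avoids the external input. (For (6) you use the refinement maps $\Phi_n\to\Phi$, whereas the paper stays inside the single packing $T_1$ and uses the supertile approximations $t_n=\Phi_1(K_n)$ of $\tau_n=\lambda^n\tau_0$; both are reasonable readings of that informal claim.)

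The genuine gap is part (7). You propose to run the substitution backwards: every tile is a sub-tile of $\omega(\tau^*)$ for some $\tau^*\in T$, and you assert that ``inspection of the six sub-tile sizes'' in Figure \ref{f:conformalomega} shows at least one sub-tile is strictly smaller than its generator, so that iterating produces infinitely many tiles of decreasing diameter. This step does not follow. The six tiles of $\omega(\tau^*)$ tile $\lambda\tau^*$, whose area is $|\lambda|^2\approx 10.1$ times that of $\tau^*$; the average sub-tile is therefore about $1.68$ times \emph{larger} in area than its generator, so there is no counting reason for any sub-tile to be smaller than $\tau^*$, and the actual diameters are governed by the conformal rather than the combinatorial geometry, which a picture cannot certify. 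Even if a smaller sub-tile existed at one stage, the iteration needs this to recur uniformly over all tiles and all three conformal types with diameters staying bounded. The paper sidesteps this entirely by importing Proposition 5.1 of \cite{StephensonBowers97}: there is an explicit sequence of tiles which, after normalization, converges in the Hausdorff metric to $b\,\overline{\tau}$ with $b\approx 1.3$, so infinitely many tiles have diameter below, say, $1.4\,\mathrm{diam}(\tau)$. Some such quantitative distortion input is what your argument is missing; note also that your independent proof of (8) cannot be recycled to patch (7), since it produces arbitrarily large tiles rather than infinitely many tiles of bounded size.
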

\begin{figure}
  % Requires \usepackage{graphicx}
  \centering
  \includegraphics[scale=.7]{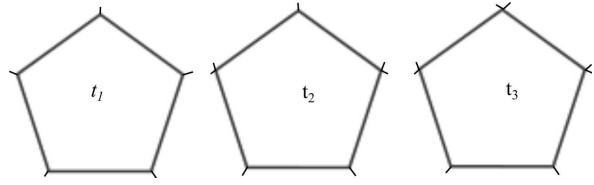}\\\
  \caption{ Prototiles of $K$.}\label{f:prototilesnodec}
\end{figure}
\begin{proof}
  $(1)$.
  There are three prototiles of $\Kconf$, namely  those shown in Figure \ref{f:prototilesnodec}.
  Since $\Phi:(K,atlas)\to\C$ is a conformal map, their images are the prototiles of $T$.
  What remains to be shown are the interior angles of such prototiles.
  If the degree of a vertex $v\in K$ is $d$, then we have $d$ pentagons glued at $v$. The interior angles of these pentagons at $v$ are $3\pi/5$.
  Recall that the angle between two smooth curves $\gamma_1$, $\gamma_2$ which cross at $v$ on a Riemann surface is defined as the angle between the charted curves $\phi_v\circ\gamma_1$, $\phi_v\circ\gamma_2$ at $\phi_v(v)=0$.
  Thus, the interior angles of these pentagons at $v$ charted by $\phi_v$ are  $(3\pi/5)(10/(3d))=(2\pi/d)$.
  Since $d=3,4$, the interior angles of these charted pentagons is either $2\pi/3$ or $2\pi/4=\pi/2$.
  Since $\Phi:(K,atlas)\to\C$ is a conformal map, the interior angles of the pentagons are preserved.

  (2). This is shown on page 9 of the article \cite{StephensonBowers97}. The idea is that each edge $e$ of the tile $\tau$ is an analytic arc, and we can use the corresponding local (anticonformal) reflection to map $\tau$ to the neighboring tile across $e$. By repeated reflections in edges, we reconstruct $T=T(\tau)$.

  (3). Since the automorphisms of the combinatorial tiling $K$ is the dihedral group $D_5$,  the automorphisms of the tiling $T$ are the $\Phi$-conjugations, that is, if $\alpha\in Aut(K)$ then  $\Phi\circ \alpha\circ\Phi^{-1}:\C\to\C$ is a (conformal) automorphism of $T$; in particular, it is a conformal automorphism of $\C$ and  hence a M\"obius transformation. Thus, $T$ has ten automorphisms.
  Since M\"obius transformations of $\C$ are exactly combinations of rotations dilations and translations,  $\tau$ and $\Phi\circ \alpha\circ\Phi^{-1}(\tau)$ are similar.
  If two tiles $\tau_1:=\Phi(f_1)$, $\tau_2:=\Phi(f_2)$ are similar in $\C$, then by (2), the tiling $T(\tau_1)$ is similar to the tiling  $T(\tau_2)$, hence combinatorially equivalent. Thus,
  there exists a combinatorial automorphism of $K$ mapping $f_1$ to $f_2$. By page 9 in the article \cite{StephensonBowers97}, the converse is also true if we use their normalization, which places the center of the tile corresponding to $K_0$ at the origin.
  In summary, two tiles are similar if and only if there is an automorphism of $T$ that maps one tile to the other. Thus, there are at most 10 similar tiles to $\tau$ (five reflections and five rotations with respect to the central pentagon).

  (4) If $T$ was periodic, then there would be an $x\in \R^2$ such that $T$ and $T+x$ are translates, i.e $T=T+x$. Hence $T=T+x=(T+x)+x=T+2x=\ldots=T+nx$, $n\in\N$.
  Since the automorphisms of $T$ are in total ten, a finite number, and a translation is an automorphism, $T$ cannot be periodic.

  (5),(6). In \cite{StephensonBowers97}, it is shown that the central tile of $T$ can be subdivided with the subdivision rule in a conformal way, and using Schwarz reflection principle, we can divide the whole $T$ . The result is a tiling $T'$ whose combinatorics are $\omega(K)$, and an analytic bijective map $\alpha:T\to T'$ mapping each tile to its subdivision. Hence  $\omega=\Phi^{-1}\circ\alpha\circ\Phi$, and thus $\omega$ is analytic.  In page 9 of the same article, it is shown that the map $\alpha:\C\to\C$ is given by $\alpha(z):=\lambda z$, $\lambda=(-324)^{1/5}$ (under some normalization settings).  Hence (5).
  We show (6) only when the tile is the central pentagon, since we have notation for this, and the same argument applies for any other tile of $T$.
  Let $\tau_n:=\Phi(K_n)$. Since $K_n$ is the subdivision of $K_{n-1}$, $\tau_n=\alpha(\tau_{n-1})=\lambda \tau_{n-1}=\cdots=\lambda^n \tau_{0}$.
  Hence $\tau_n$ and $\tau_0$ are euclideanly similar.
  Recall that $T_1:=\Phi_1(K)$ is the first approximation of $T$.
  Let $t_n:=\Phi_1(K_n)$ be the first approximation of the supertile $\tau_n$.
  Since the interior angles of the boundary of $t_n$ are closer to $2\pi/3$ the bigger $n$ gets, $t_n$ gives a better approximation of $\tau_0$ than $t_0$. See Figure \ref{f:approxt0}.

  (7) In the proof of  Proposition 5.1 of the article, it is shown that for each tile $\tau$ there is a sequence $(\tau_j^n)_{n\in\N}$ of tiles that converge in Hausdorff metric to $b\bar\tau$, where $b\approx 1.3$. See Figure \ref{f:seqforconstantb} and Figure \ref{f:hausdorfflimitconstantb}. Recall that the Hausdorff metric is defined on compact sets $A,B$ by $d(A,B)=\max(\max_{x\in A}d'(x,B),\max_{y\in B}d'(A,y))$, where $d'(x,B)$ is the smallest distance between $x$ and $B$. See Figure \ref{hausdorffmetric}.
  \begin{figure}[htbp]
  \begin{minipage}[b]{0.48\linewidth}
    \centering
    \includegraphics[width=\linewidth]{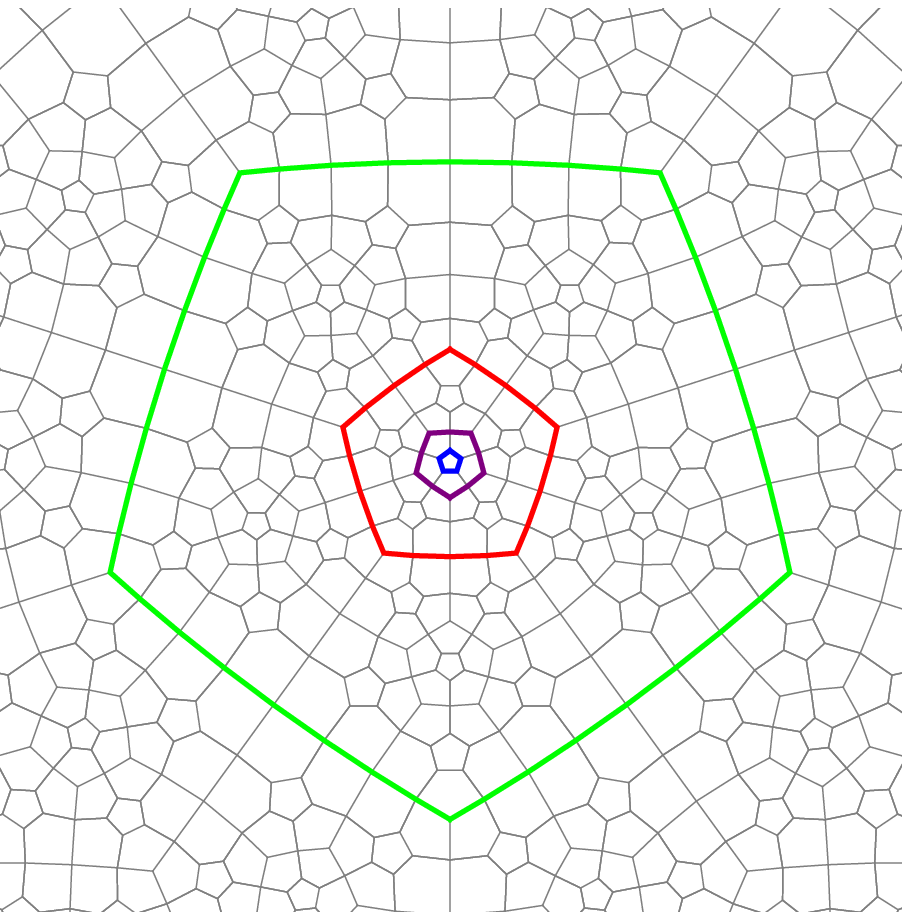}
    \caption{The approximations $t_i$, $i=0,1,2,3$ of the central tile $\tau_0$.}
    \label{f:approxt0}
  \end{minipage}
  \hspace{0.2cm}
  \begin{minipage}[b]{0.48\linewidth}
    \centering
    \includegraphics[width=\linewidth]{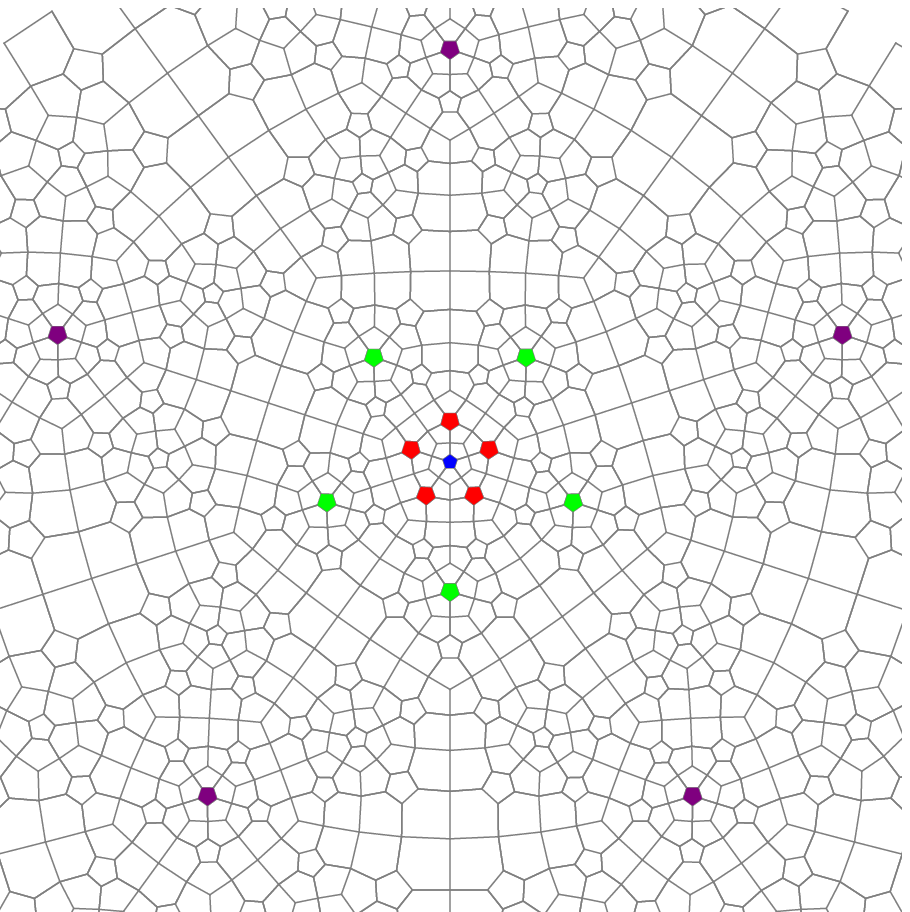}
    \caption{The colored pentagons, once normalized as in Figure \ref{f:hausdorfflimitconstantb}, converge  in Hausdorff measure  to $b\,\overline{\tau_0}$, where $\tau_0$ is the central pentagon.}
    \label{f:seqforconstantb}
  \end{minipage}
\end{figure}
\begin{figure}
  % Requires \usepackage{graphicx}
  \centering
  \includegraphics[scale=.2]{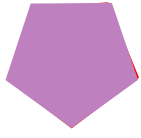}\\\
  \caption{The colored pentagons (rotated and centered) from Figure \ref{f:seqforconstantb} converge  in Hausdorff measure to  $b\,\overline{\tau_0}$.}\label{f:hausdorfflimitconstantb}
\end{figure}

\begin{figure}
  % Requires \usepackage{graphicx}
  \centering
  \includegraphics[scale=.5]{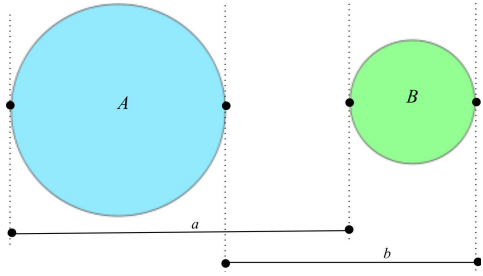}\\\
  \caption{Define $a:=\max_{x\in A}d'(x,B)$,  $b:=\max_{y\in B}d'(A,y))$. The Hausdorff distance between $A$ and $B$ is $max(a,b)=a$.}\label{hausdorffmetric}
\end{figure}

  (8) Let $\tau_0\in T$ be a tile. By the proof of (7), there is a tile $\tau_{1}$ satisfying
   $b'diam(\tau_0)<  diam(\tau_{1})$, where $b'<b\approx 1.3$, say $b':=1.1$.
  Repeating the same argument for $\tau_1$, we find a tile $\tau_2$ satisfying  $b'^2 diam(\tau_0)<  b' diam(\tau_1)<  diam(\tau_{2})$.
  Repeating the same argument $n$ times we obtain a tile $\tau_n\in T$ satisfying $b'^n diam(\tau_0)< diam(\tau_n)$ of any size.
\end{proof}

%In conclusion, the tiling $T$ is very rigid and investigating $T$ is basically the same as investigating the combinatorial tiling $K$.\\

\begin{thm}\label{t:TnoFLCwrtConfAutOfC}
  The tiling $T$ does not satisfy the finite local complexity with respect to the group of isometries.
\end{thm}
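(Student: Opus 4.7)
The plan is to produce infinitely many pairwise non-isometric patches of uniformly bounded diameter, which directly contradicts FLC with respect to isometries. Individual tiles are the most economical patches to use as witnesses.

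First, by property (7) of Proposition \ref{p:propertiesofT}, I would fix $n>1$ and extract an infinite family $\{\tau_k\}_{k\in\N}$ of pairwise distinct tiles of $T$ satisfying $\mathrm{diam}(\tau_k)<n$. Each $\tau_k$ on its own is a patch of diameter less than $n$.

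Second, I would argue that these tiles occupy infinitely many isometry classes. The group of plane isometries is a subgroup of the group of euclidean similarities (isometries are exactly the similarities of scale factor one), so every isometry class of tiles in $T$ is contained in some euclidean-similarity class in $T$. Property (3) bounds each similarity class by ten tiles, so each similarity class contributes at most ten isometry classes. If only finitely many isometry classes appeared among the $\tau_k$, then only finitely many similarity classes would appear, forcing the whole family to be finite, which contradicts our choice.

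The FLC failure then follows immediately from the definition: for this particular $n$, there are infinitely many patches (the $\tau_k$) of diameter less than $n$ that are pairwise inequivalent under isometries. The argument is essentially a counting consequence of properties (3) and (7), so there is no real obstacle once those are in place; the only point deserving care is checking that the notion of ``similar'' used in property (3) covers both orientation-preserving and orientation-reversing similarities, so that isometries (including reflections) truly form a subgroup. Inspection of the proof of (3) confirms this, since the ten automorphisms are counted as the five rotations \emph{and} five reflections of $D_5$.
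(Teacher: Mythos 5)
Your proposal is correct and follows essentially the same route as the paper: both arguments combine property (7) (infinitely many tiles of uniformly bounded diameter) with property (3) (each euclidean-similarity class contains at most ten tiles) to conclude that these tiles fall into infinitely many isometry classes, contradicting FLC. Your write-up is in fact slightly more careful than the paper's in making explicit that isometries form a subgroup of the similarities, so that the bound from (3) transfers to isometry classes.
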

\begin{proof}
 Let $G$ be the group of isometries.
  Suppose that $T$ has FLC with respect to $G$.
  Let $\tau_0$ be the central tile of $T$.
  By part (7) of the previous Proposition \ref{p:propertiesofT}, there is a sequence of tiles of diameter less than $b\cdot diam(\tau_0)$, $b\approx 1.3$.
  Since $T$ has FLC with respect to $G$,  there is a finite number of patterns; that is, each of these tiles are euclideanly similar to a finite number of them. This is a contradiction, because by part (3) of the previous Proposition \ref{p:propertiesofT} each tile is euclideanly similar to at most 10 other tiles.
\end{proof}

 Informally, FLC says that if we consider all regions of a fixed size, up to some notion of equivalence, there are only finitely many. We don't need a group; we need notion of equivalence. In the classical theory, everything is sitting in the plane and it is natural to use translations or isometries.
 If the notion of equivalence we are using does not preserve the notion of size, it is useless, for what is the point of taking two regions of size $r$ and asking if there is an isomorphism between them that doubles the size? This isn't the end of the problems because we will lose compactness and so on as well. But it is enough to give up on the idea. Take for example the sequence of tiles of diameter less than $b\cdot diam(\tau_0)$, $b\approx 1.3$ from the proof of Theorem \ref{t:TnoFLCwrtConfAutOfC} with the additional property that they shrink by the factor $1.3$, but the patches around them are converging combinatorially to $K$.  What does this sequence converge to? We don't know.

 Thus we have to abandon the notion of FLC with respect to the set of conformal isomorphisms that are defined between open subsets of the plane.
  However, we will show in another article that the combinatorial tiling  $K$ has FLC with respect to the set of isomorphisms that are defined between subcomplexes of $K$.

\section*{Acknowledgments}
The results of this paper were obtained during my Ph.D. studies at University of Copenhagen. I would like to express deep gratitude to my supervisor Erik Christensen and to Ian F. Putnam, Kenneth Stephenson, Philip L. Bowers, Bill Floyd, whose guidance and support were crucial for the successful completion of this work.

% -----------------------------------------------------------
\bibliographystyle{amsplain}
%\bibliography{xbib}

\end{document}